\numberwithin{equation}{section}
\newcommand{\beq}{ \begin{equation}}
\newcommand{\eeq}{ \end{equation}}
\newcommand{\bqa}{ \begin{eqnarray}}
\newcommand{\eqa}{ \end{eqnarray}}
\newtheorem{theorem}{Theorem}[section]
\newtheorem{dfnt}{Definition}[section]
\newtheorem{corollary}{Corollary}[section]
\newtheorem{prop}{Proposition}[section]
\newtheorem{prob}{Problem}[section]
\begin{document}
\title{Single Polygon Counting for $m$ Fixed Nodes in Cayley Tree: Two Extremal Cases}

\maketitle

\normalsize

\begin{center}
Farrukh Mukhamedov\footnote{far75m@yandex.ru}, \  Chin Hee Pah
\footnote{pahchinhee@hotmail.com} \ \
Mansoor  Saburov\footnote{msaburov@gmail.com}\\
Department of  Computational and Theoretical Sciences,\\
Faculty of Science, IIUM, Kuantan, Malaysia
\end{center}


\vskip 0.1cm
\abstract{  We denote a polygon as a connected
component in Cayley tree of order 2 containing certain number of fix
vertices. We found an exact formula for a polygon counting problem
for two cases, in which, for the first case the polygon contain a
full connected component of a Cayley tree and  for the second case
the polygon contain  two fixed vertices. From these formulas, which
is in the form of finite linear combination of Catalan numbers,  one
can find the asymptotic estimation for a counting problem.}

\vskip 2mm
 \noindent {\bf Keyword}: Cayley tree, connected component, Catalan number. \\
 Mathematics Subject Classification : 68M10, 11B83,  94C15 \\


\section{Introduction}

In the study of computer science, networks appears very often and
attracted a lot of attention of researchers \cite{mm,gdm2}. In a
generic undirected network particle can enter or exit at any
arbitrary sites. Again, the presence of loops in the generic
networks, also make the study of particle transport difficult. A
prototype network is a Cayley tree \cite{bax}, where the direction of
transport, the entry and exit points are well defined. Absence of
loops make the study relatively simpler. Again, several physical
systems like, water transport in trees, transport of antibody in
idiotypic networks in immune system \cite{ANP}, and air circulation
in lung \cite{BB} are strikingly similar to this model system of
Cayley trees.

There are extensively many combinatorial problems on Cayley tree
e.g. the connection between prefix ordered sequences and rooted
labelled trees \cite{sunik, k, hp,ss} and Dyck path \cite{barcucci}.
It is not surprise that most of them are related to the  well known
Catalan numbers \cite{stan}. Catalan numbers is one of the most
frequently encounter integer sequence in counting problem \cite{hp}.
Nowadays, there are many applications as well as  generalizations of
such numbers \cite{A,HLM,MS}.

In this paper, we will consider the following problem: \textit{to
find the number of all different connected components of a Cayley
tree with $n$ number of vertices, containing the given $m$ number of
vertices (where $n\ge m$).}  We borrow the term using in integer
lattice, i.e. polygon, for this connected component. In computer
science, one can simply visualize this scenario as we are setting up
$m$ routers and each router is expands in a rooted tree, how many
different way we can arrange the network with given $n$ (nodes). As
pointed out in \cite{gdm2}, a giant connected component is analogous
to the percolation cluster in condensed matter. The size
distribution of these finite connected components also describe the
topology of a random network. So, we see this research not merely
as a mathematical exercise but it does provide some applications in
network theory as well.

Note that in \cite{pah}, the posted problem was solved for the case
$m=1,$ namely, it was shown that the number of different connected
component containing a fixed  root $x_0 \in V$ in a semi-infinite
Cayley tree for a given $n$ number of vertices is exactly the
Catalan number
\begin{equation}\label{cat}
C_n=\frac{1}{n+1}\binom{2n}{n}, \quad n \in {\mathbb N}.
\end{equation}

In this paper, first we give a topological structure of the
connected component of the Cayley tree. Using these structures, we
consider two extremal cases for single polygon counting problem. A
recurrent formula for the said problem is derived combinatorically,
then an explicit form of single polygon counting is found in term of
linear combination of Catalan numbers  where  the coefficients do
not depend on $n$. The linear combination formulas are derived using
generating vectors, analogues to generating function, which will be
defined later. From these formulas, one can easily find the
asymptotic behavior of the derived numbers.



\begin{center}
\section{A topological structure of the connected  component of the Cayley tree}
\end{center}

Recall that a  Cayley tree of order $k$ \cite{bax}, denoted as
$\Gamma^k$, is a  graph with no cycles, each vertex emanates $k+1$
edges. We denote the set of all vertices as $V$ and the set of all
edges as $E$, i.e. $\Gamma^k=(V,E)$. In this paper we restrict
ourselves to the Cayley tree of order 2, i.e. $\Gamma^2$.  Two
vertices $x,y\in V$ are called {\it nearest neighbors} if $\langle
x,y\rangle\in E.$ Let $K=(V_K,E_K)$ be a finite connected component
of a Cayley tree, where $V_K$ contains at least two vertices of the
tree.

For a given $x\in V_K$ we put
\begin{eqnarray*}
{\mathcal{N}}_K(x)=\{y\in V_K : \langle x,y\rangle\in E_K\}.
\end{eqnarray*}
The elements of ${\mathcal{N}}_K(x)$ are called the \textit{nearest
neighbors} of $x$ in $K.$ We stand $|{\mathcal{N}}_K(x)|$ for the
number of the elements of ${\mathcal{N}}_K(x).$

\begin{dfnt}
A vertex $x\in V_K$ is called a boundary of $K,$ if
$|{\mathcal{N}}_K(x)|=1$ i.e. the nearest neighbor of $x$ in $K$ is
one. Otherwise, it is called an interior vertex of $K.$
\end{dfnt}

By $\partial K$ and $intK$ we denote the set all of boundary and interior vertices of $K,$ respectively. \\

\noindent {\it Remark 1.} Note that the given definition of the
boundary and interior is totaly different from the ordinary
definitions of boundary and interior points of a set, which is used
in topological spaces or in the graph theory.

So, by definition, we have
\begin{eqnarray*}
\partial K&=&\{x\in V_K : |{\mathcal{N}}_K(x)|=1\},\\
int K &=&\{x\in V_K : |{\mathcal{N}}_K(x)|\ge 2\},
\end{eqnarray*}
and
\begin{eqnarray*}
\partial K\cap\ int K=\emptyset,
\quad \quad  \partial K\cup\ int K=V_K.
\end{eqnarray*}

\begin{dfnt}
A finite connected component $K=(V_K,E_K)$ is said to be full in a Cayley tree, if $|{\mathcal{N}}_K(x)|=3$ for any $x\in int K.$
\end{dfnt}

\noindent {\it Remark 2.}
If $int K=\emptyset$ then by definition we say that $K$ is full.

\begin{prop}\label{minimalconnectedcom}
For any finite connected component $K=(V_K,E_K)$ there exists one and only one full connected component
$\overline{K}=(V_{\bar{K}},E_{\bar{K}}),$ containing $K$ and contained in an arbitrary full connected component
$K^{'}=(V_{K^{'}},E_{K^{'}})$ which contains $K.$
\end{prop}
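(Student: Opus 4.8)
The plan is to construct $\overline{K}$ by hand and then deduce minimality and uniqueness directly from the construction. Because every vertex of $\Gamma^2$ is incident to exactly three edges, a connected component is full precisely when it has no vertex $x$ with $|\mathcal{N}_K(x)|=2$, i.e.\ when every vertex has one or three neighbours inside the component. So I would define $\overline{K}=(V_{\bar{K}},E_{\bar{K}})$ by adjoining to $K$, for each $x\in int K$ with $|\mathcal{N}_K(x)|=2$, the unique neighbour of $x$ in $\Gamma^2$ not already lying in $V_K$, together with the edge joining them. Since $int K$ is finite this produces a finite subgraph of $\Gamma^2$, and it is connected because every adjoined vertex is attached to a vertex of the connected graph $K$; clearly $K\subseteq\overline{K}$.

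The key step is to verify that $\overline{K}$ is full, and here acyclicity of $\Gamma^2$ does all the work; I expect this bookkeeping to be the main obstacle. The central point is that each adjoined vertex $y$ is a boundary vertex of $\overline{K}$: writing $x\in int K$ for the vertex that produced $y$, a hypothetical second neighbour $y'$ of $y$ in $V_{\bar{K}}$ would either lie in $V_K$ or be the vertex adjoined to some $x'\in int K$, and in both cases one obtains a path in $\Gamma^2$ of length at most $3$ between two vertices of $V_K$ (namely $x$ and $y'$, or $x$ and $x'$) which passes through $y\notin V_K$; since $K$ is connected it must contain the unique $\Gamma^2$-path between those two vertices, forcing $y\in V_K$ --- a contradiction. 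The same type of argument, applied to pairs of adjoined vertices and to the old boundary vertices, shows that distinct interior vertices of $K$ produce distinct adjoined vertices and that each boundary vertex of $K$ still has exactly one neighbour in $\overline{K}$. Hence in $\overline{K}$ the vertices of $int K$ have exactly three neighbours and every other vertex exactly one, so $\overline{K}$ is full. The degenerate case $int K=\emptyset$ needs only a one-line remark: nothing is adjoined and $\overline{K}=K$ is already full by Remark~2.

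For minimality, let $K'=(V_{K'},E_{K'})$ be any full connected component with $K\subseteq K'$. If $y\in V_{\bar{K}}$ then either $y\in V_K\subseteq V_{K'}$, or $y$ is the vertex adjoined to some $x\in int K$; in the latter case $x\in V_{K'}$ and $|\mathcal{N}_{K'}(x)|\ge|\mathcal{N}_K(x)|\ge2$, so $x\in int K'$, and fullness of $K'$ gives $|\mathcal{N}_{K'}(x)|=3$, whence all three $\Gamma^2$-neighbours of $x$, in particular $y$, lie in $V_{K'}$. Thus $V_{\bar{K}}\subseteq V_{K'}$, and then $E_{\bar{K}}\subseteq E_{K'}$ because in a tree a connected subgraph contains every edge between two of its vertices (such an edge is the unique path between its endpoints). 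In particular the family of full connected components containing $K$ is nonempty, so the quantified statement is not vacuous.

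Finally, uniqueness follows formally: if $\overline{K}_1$ and $\overline{K}_2$ both enjoy the stated property, then $\overline{K}_1\subseteq\overline{K}_2$ (apply the property of $\overline{K}_1$ to the full component $\overline{K}_2\supseteq K$) and symmetrically $\overline{K}_2\subseteq\overline{K}_1$, so they coincide. Throughout, the only subtlety is using the tree structure repeatedly in slightly different configurations --- to exclude that an adjoined vertex meets another adjoined vertex, meets an old boundary vertex, or comes from two different interior vertices --- and care is needed not to conflate these cases.
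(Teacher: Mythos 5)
Your proof is correct and follows essentially the same route as the paper: both construct $\overline{K}$ by adjoining to $K$ the missing tree-neighbours of its interior vertices, then read off minimality and uniqueness from that construction. You are in fact more careful than the paper in verifying that $\overline{K}$ is full (the acyclicity bookkeeping that the paper dismisses with ``one can see''), and your mutual-containment argument for uniqueness is a clean equivalent of the paper's comparison of interiors and boundaries.
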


\begin{proof}
Assume that $int K\neq \emptyset$, otherwise nothing to prove. Let
us construct the following connected component
$\overline{K}=(V_{\bar{K}},E_{\bar{K}})$ of the tree:
\begin{eqnarray*}
V_{\bar{K}}=\{\bar{x}\in V : \exists x\in int K, \langle x,\bar{x}\rangle\in E\}=\bigcup\limits_{x\in int K}{\mathcal{N}}_E(x),
\end{eqnarray*}
where ${\mathcal{N}}_E(x)$ is a set all the nearest neighbors of $x$
in the tree.

From this construction one can see that
\begin{eqnarray*}
int \overline{K}=int K, \quad \partial\overline{K}\supset\partial K, \quad |{\mathcal{N}}_{\bar{K}}(x)|=3, \quad  x\in int \overline{K},
\end{eqnarray*}
which means $\overline{K}=(V_{\bar{K}}, E_{\bar{K}})$ is a full
connected component containing $K.$ It follows from this
construction that $\overline{K}=(V_{\bar{K}}, E_{\bar{K}})$
contained  any full connected component
$K^{'}=(V_{K^{'}},E_{K^{'}})$ which contains $K.$

Let us prove the uniqueness. Suppose the contrary, i.e. there exists
$\widetilde{K}=(V_{\tilde{K}}, E_{\tilde{K}})$ which satisfies the
assertion of the proposition. Then we have
\begin{eqnarray*}
int K\subset int \widetilde{K}\subset int \overline{K}\subset int K.
\end{eqnarray*}
Therefore $int \widetilde{K}= int \overline{K}.$ Since
$\widetilde{K}$ and $\overline{K}$ are full, then we have $\partial
\widetilde{K}=\partial \overline{K}.$ This means $\widetilde{K}=
\overline{K}$, which completes the proof. \end{proof}

A full connected component $\overline{K}=(V_{\bar{K}},E_{\bar{K}})$
which satisfies the assertion of Proposition
(\ref{minimalconnectedcom}) is called the \textit{minimal full
component over  $K=(V_K,E_K).$}

\begin{theorem}
A finite connected component $K=(V_K,E_K)$ is full if and only if $|\partial K|-|int K|=2.$
\end{theorem}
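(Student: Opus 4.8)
The plan is to prove both directions by a counting argument on edges, exploiting the fact that a finite connected component of a tree is itself a tree, so $|E_K| = |V_K| - 1$. I would also count edge-endpoint incidences, i.e. $\sum_{x \in V_K} |{\mathcal N}_K(x)| = 2|E_K|$, and split the vertex sum according to the partition $V_K = \partial K \cup int K$.

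For the forward direction, suppose $K$ is full. If $int K = \emptyset$, then $K$ consists of two vertices joined by an edge (since $V_K$ has at least two vertices and every vertex is a boundary vertex, the component is a single edge), so $|\partial K| = 2$, $|int K| = 0$, and the identity holds. If $int K \neq \emptyset$, then every interior vertex contributes exactly $3$ to the incidence sum and every boundary vertex contributes exactly $1$, giving $|\partial K| + 3\,|int K| = 2|E_K| = 2(|V_K| - 1) = 2(|\partial K| + |int K|) - 2$. Rearranging yields $|\partial K| - |int K| = 2$.

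For the converse, suppose $|\partial K| - |int K| = 2$; I want to show $|{\mathcal N}_K(x)| = 3$ for every $x \in int K$. The incidence identity still reads $\sum_{x \in \partial K} |{\mathcal N}_K(x)| + \sum_{x \in int K} |{\mathcal N}_K(x)| = 2(|V_K|-1)$. The first sum equals $|\partial K|$ by definition of boundary vertex. Since $K$ lives in $\Gamma^2$, each vertex has degree at most $3$ in the whole tree, so $|{\mathcal N}_K(x)| \le 3$ for every $x$, and $\ge 2$ for interior vertices; hence $\sum_{x \in int K} |{\mathcal N}_K(x)| \le 3\,|int K|$. Substituting $|V_K| = |\partial K| + |int K|$ and the hypothesis $|\partial K| = |int K| + 2$ into the identity forces $\sum_{x \in int K} |{\mathcal N}_K(x)| = 3\,|int K|$; combined with the pointwise bound $|{\mathcal N}_K(x)| \le 3$, equality must hold at every interior vertex, i.e. $K$ is full. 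The case $int K = \emptyset$ is covered by Remark 2 (and is consistent, since then $|\partial K| = 2$).

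The argument is essentially a double-counting bookkeeping exercise, so no single step is a genuine obstacle; the main point requiring care is the degenerate case $int K = \emptyset$, which must be treated separately because the "$3$ per interior vertex" bound is vacuous there, and one must invoke that $V_K$ contains at least two vertices (from the standing assumption on $K$) together with connectedness to conclude $K$ is a single edge with $|\partial K| = 2$. One should also be explicit that $|{\mathcal N}_K(x)| \le |{\mathcal N}_E(x)| = 3$ because $\Gamma^2$ has every vertex of degree $3$; this is what rules out larger interior degrees in the converse.
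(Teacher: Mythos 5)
Your proof is correct, and it takes a genuinely different route from the paper's. The paper proves the forward direction by induction on $|int K|$, peeling off one interior vertex at a time, and proves the converse by embedding $K$ in its minimal full component $\overline{K}$ (Proposition 2.1) and showing $\partial K=\partial\overline{K}$. You instead derive both directions from the single identity $\sum_{x\in V_K}|{\mathcal{N}}_K(x)|=2|E_K|=2(|V_K|-1)$, which after splitting the sum over $\partial K$ and $int K$ reads $\sum_{x\in int K}|{\mathcal{N}}_K(x)|=|\partial K|+2|int K|-2$. Fullness makes the left side equal to $3|int K|$, giving the equality; conversely, the hypothesis forces the left side to equal $3|int K|$, and since each summand is at most $3$ (every vertex of $\Gamma^2$ has degree $3$) this pins every interior degree at exactly $3$. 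Your argument is shorter, self-contained (it does not need Proposition 2.1), and as a bonus the same identity immediately yields the paper's Corollary 2.1: from $2|int K|\le\sum_{x\in int K}|{\mathcal{N}}_K(x)|\le 3|int K|$ one gets $2\le|\partial K|\le|int K|+2$ at once. What the paper's route buys is structural information --- the inductive peeling and the minimal-full-component comparison carry conceptual weight elsewhere in the paper --- but as a proof of this particular theorem your double count is cleaner, and your separate treatment of the degenerate case $int K=\emptyset$ is handled correctly.
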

\begin{proof} \textit{Only if Part.}
Let  $K=(V_K,E_K)$ be a full connected component. Then we will prove that $|\partial K|-|int K|=2$. We use mathematical induction w.r.t. the number $|int K|$ of interior vertices of $K.$

Let $int K=\emptyset.$ Then  $K=\langle x,y\rangle\in E$ and
$\partial K=\{x,y\}.$ Therefore we have $|\partial K|-|int
K|=2-0=2.$ We suppose that the assertion of Theorem is true for any
full connected component $K$ with $|int K|\le k,$ i.e. $|\partial
K|-|int K|=2.$

Now assume that $|int K|=k+1.$ Let us consider a full connected
subcomponent $\widetilde{K}=(V_{\tilde{K}},E_{\tilde{K}})$ of $K$
such that:
\begin{itemize}
  \item [(i)] $V_{\tilde{K}}\subset V_K,$ $E_{\tilde{K}}\subset E_K;$
  \item [(ii)] $int \widetilde{K}\subset int K$ and $|int \widetilde{K}|=k$ i.e. $int \widetilde{K}=\{x_1,x_2,\cdots,x_k\};$
  \item [(iii)] If $int K\setminus int \widetilde{K}=\{x_{k+1}\}$ then we have $int \widetilde{K}\cap {\mathcal{N}}_K(x_{k+1})=1.$
\end{itemize}
From the construction one can see that
\begin{eqnarray*}
x_{k+1}\in \partial \widetilde{K}, \quad \partial \widetilde{K}\setminus\{x_{k+1}\}\subset \partial K, \quad |\partial K\setminus\partial \widetilde{K}|=2.
\end{eqnarray*}
Then we have  $|\partial K|=|\partial \widetilde{K}|+1$ and $|int
K|=|int \widetilde{K}|+1.$ Since $\widetilde{K}$ is a full connected
component with $|int \widetilde{K}|=k$, so according to the
assumption of induction we have $|\partial \widetilde{K}|-|int
\widetilde{K}|=2.$ Consequently, one finds  \begin{eqnarray*}
|\partial K|-|int K|= |\partial \widetilde{K}|+1-|int
\widetilde{K}|-1=|\partial \widetilde{K}|-|int \widetilde{K}|=2.
\end{eqnarray*}

\textit{If part.} Let $K=(V_K,E_K)$ be a connected component with
$|\partial K|-|int K|=2.$ We shall show that $K$ is full. To do it,
let us consider the minimal full connected component
$\overline{K}=(V_{\bar{K}},E_{\bar{K}})$ containing $K=(V_K,E_K).$
We then have $int K=int \overline{K}$ and $\partial K\subset
\partial \overline{K}$ (see Proposition \ref{minimalconnectedcom}).
Since $\overline{K}$ is full, it then follows from only if part of
this Theorem that $|\partial\overline{K}|-|int\overline{K}|=2.$
Therefore,
\begin{eqnarray*}
|\partial\overline{K}|=|int \overline{K}|+2=|int K|+2=|\partial K.|
\end{eqnarray*}
Finiteness of $\partial\overline{K}$ and $\partial K\subset
\partial \overline{K}$ imply  $\partial K= \partial \overline{K}.$
This means $K=\overline{K}$, which completes the proof.
\end{proof}

\begin{corollary}
For any connected component $K=(V_K,E_K)$ of a Cayley tree we have
\begin{eqnarray}\label{estmationforboundary}
2\le|\partial K|\le|int K|+2.
\end{eqnarray}
\end{corollary}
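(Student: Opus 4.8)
The plan is to treat the two inequalities separately, pulling the right-hand bound out of the Theorem via Proposition~\ref{minimalconnectedcom} and getting the left-hand bound from an elementary property of finite trees.

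For the upper bound $|\partial K|\le |int K|+2$, I would first dispose of the degenerate case $int K=\emptyset$: then $K$ is full by Remark~2, so the Theorem gives $|\partial K|-|int K|=2$, i.e. $|\partial K|=2=|int K|+2$, and we are done. If $int K\neq\emptyset$, I would pass to the minimal full component $\overline{K}=(V_{\bar K},E_{\bar K})$ over $K$ provided by Proposition~\ref{minimalconnectedcom}. By that proposition $int K=int\overline{K}$ and $\partial K\subseteq\partial\overline{K}$, while $\overline{K}$ being full lets me apply the Theorem to $\overline{K}$:
\[
|\partial K|\le|\partial\overline{K}|=|int\overline{K}|+2=|int K|+2 .
\]

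For the lower bound $2\le|\partial K|$, I would use the standing hypothesis that $V_K$ contains at least two vertices together with the fact that $K$, being a connected subgraph of the acyclic graph $\Gamma^2$, is itself a finite tree on at least two vertices; every such tree has at least two leaves. Concretely, choosing a path in $K$ of maximal length, each of its two endpoints $x$ must satisfy $|{\mathcal N}_K(x)|=1$ (otherwise the path extends), so both endpoints lie in $\partial K$ and hence $|\partial K|\ge 2$.

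A cleaner self-contained alternative, which I would at least mention, bypasses the Theorem entirely: since $K$ is a tree, $\sum_{x\in V_K}|{\mathcal N}_K(x)|=2|E_K|=2\big(|\partial K|+|int K|\big)-2$; isolating the contribution of $int K$ gives $\sum_{x\in int K}|{\mathcal N}_K(x)|=|\partial K|+2|int K|-2$, and then the bounds $2\le|{\mathcal N}_K(x)|\le 3$ for $x\in int K$ (valid in $\Gamma^2$) yield $2\le|\partial K|$ and $|\partial K|\le|int K|+2$ simultaneously. No step here is a real obstacle; the only point requiring care is the empty-interior case, which must invoke Remark~2 so that the Theorem is applicable.
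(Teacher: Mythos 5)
The paper states this corollary without any proof, and your main argument is exactly the intended derivation and is correct: the upper bound follows by passing to the minimal full component $\overline{K}$ (Proposition~2.1 gives $int\,K=int\,\overline{K}$ and $\partial K\subseteq\partial\overline{K}$, and the Theorem applied to $\overline{K}$ gives $|\partial\overline{K}|=|int\,K|+2$), while the lower bound is just the fact that a finite tree on at least two vertices has at least two leaves, the empty-interior case being handled via Remark~2. Your self-contained degree-sum alternative is also valid and is arguably cleaner, since it delivers both inequalities at once without invoking the Theorem or the minimal-full-component construction.
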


\begin{center}
\section{Polygon counting problem for $m$ vertices}
\end{center}

Suppose that  an arbitrary  $m$ number of vertices of a Cayley tree
$\Gamma^2=(V,E)$ be given. Namely $V_m=\{x_1,x_2,\dots,x_m\}$. Let
us state our problem:

\begin{prob}
Find the number of all connected components of a Cayley tree with $n$ number of vertices containing the given $m$ number of vertices, where $n\ge m$.
\end{prob}

Let
\begin{eqnarray*}
G_{n,V_m}^{m}=\{K=(V_K,E_K) : |V_K|=n, \quad V_K\supset V_m\}
\end{eqnarray*}
be the set  of all  connected components containing the given $m$
vertices $V_m.$ Our main task is to evaluate the number
$\left|G_{n,V_m}^{m}\right|$ of elements of $G_{n,V_m}^{m}.$ Since
there  always exists a connected component containing the given $m$
vertices in a Cayley tree.  Among such kind of connected components
we take minimal one, i.e. by a minimal connected component
containing vertices $V_m$ we mean a connected component
$K(V_m)=(V_K,E_K)$ such that $V_m\subset V_K$ and if $V_m$ is
contained in another connected component $K'=(V',E')$ then one has
$V_K\subset V'$ and $E_K\subset E'$. One can see that if
$K(x_i,x_j)$ is a shortest path connecting two vertices $x_i$ and
$x_j$ then
\begin{eqnarray*}
K(V_m)=\bigcup\limits_{i=1}^{m-1}K(x_i,x_{i+1}).
\end{eqnarray*}
In this case, we can reformulate our problem as follows:
\begin{prob}
Find the number of all connected components of a Cayley tree with
$n$ number of vertices, containing the given connected component
$K(V_m).$
\end{prob}
In other words, if
\begin{eqnarray*}
G_{n,K(V_m)}^m=\{K=(V_K,E_K): K\supset K(V_m),\quad |V_K|=n\}
\end{eqnarray*}
is the set all of the connected components containing the given connected component $K(V_m)$ then we have
$
\left|G_{n,V_m}^{m}\right|=\left|G_{n,K(V_m)}^{m}\right|.
$
It is easy to check that if
\begin{eqnarray*}
G_{n,K(V_m)}^{|\partial{K(V_m)}|}=\{K=(V_K,E_K): V_K\supset \partial K(V_m),\quad |V_K|=n\}
\end{eqnarray*}
is the set all of the connected components containing the given  number $|\partial K(V_m)|$ of vertices $\partial K(V_m),$ then we have
$$
\left|G_{n,K(V_m)}^{|\partial K(V_m)|}\right|=\left|G_{n,K(V_m)}^{m}\right|=\left|G_{n,V_m}^{m}\right|.
$$

Therefore, in what follows, we shall calculate the number of
elements of $G_{n,K}^{|\partial{K}|}$ which is the set all of the
connected components containing the given number $|\partial K|$ of
vertices $\partial K.$ From the inequality
\eqref{estmationforboundary} we can get estimation to the number
$|\partial K|$ as follows
\[2\le|\partial K|\le|int K|+2.\]

In next sections, we will solve the polygon calculating problem for
two extremal cases of the number $|\partial K|,$ namely we shall
consider cases: $|\partial K|=|int K|+2$ and $|\partial K|=2.$  In
other words, in the first case $K$ is a full connected component and
in the second case $K$ is a shortest path which connects  two given
vertices.

\begin{center}
\section{Polygon counting problem for a full connected component}
\end{center}

Let $K=(V_K,E_K)$ be a given full connected component of a Cayley
tree with $|\partial K|=m,$ where $m\ge 2.$ We then know that $|int
K|=m-2.$ By $F_{n,m-2}^{(m)}$ we denote the number of elements of
$G_{n,K}^{|\partial K|}$.

\begin{figure}[h]
\centering
\includegraphics[width=0.5\columnwidth, clip]{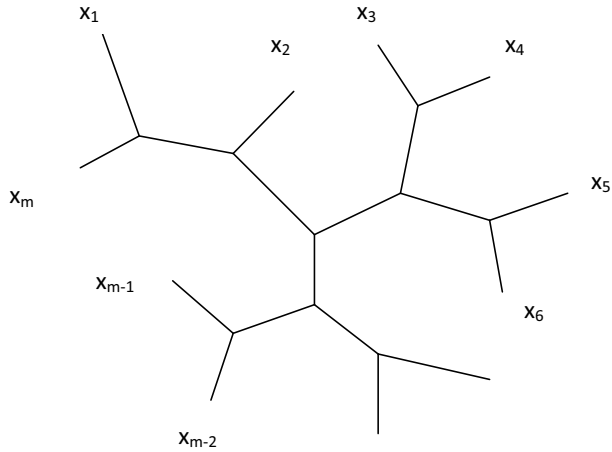}\\
\caption{A connected component containing vertices $x_1$, $x_2$,...,$x_m$, connected component which emanate out from $x_i$ is not shown.}
\end{figure}

\begin{prop}
Let $C_n$ be Catalan numbers, where $n\in {{\mathbb{N}}}$ (see
\eqref{cat}) then we heve
\begin{eqnarray}\label{fomulaforfulcomponent}
F_{n,m-2}^{(m)}=\sum\limits_{\substack{r_1+\cdots+r_m=n-m+2 \\r_1,\ldots, r_m\ge 1}}C_{r_1}\cdot\ldots \cdot C_{r_m}.
\end{eqnarray}
\end{prop}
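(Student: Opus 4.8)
The plan is to count the connected components $K'=(V_{K'},E_{K'})$ with $|V_{K'}|=n$ that contain the given full component $K$ by decomposing each such $K'$ according to what it looks like ``hanging off'' each boundary vertex of $K$. Recall that $K$ has $m$ boundary vertices $x_1,\dots,x_m$ and $m-2$ interior vertices, and in the ambient Cayley tree $\Gamma^2$ every interior vertex of $K$ already has all three of its tree-neighbors inside $K$, so no new edges of $K'$ can emanate from $\mathrm{int}\,K$. Hence the only freedom in building $K'$ from $K$ is at the boundary: each $x_i$ has, inside $K$, exactly one neighbor, and therefore two further tree-edges available; attaching a (possibly empty) subtree there grows $K$ into $K'$.

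First I would make this precise: for each $i$, deleting from $K'$ the unique edge of $K$ incident to $x_i$ splits off the ``branch'' $B_i$ rooted at $x_i$, consisting of $x_i$ together with everything reachable from $x_i$ without using edges of $K$. Each $B_i$ is a connected component of $\Gamma^2$ containing the single fixed root $x_i$, and these branches are vertex-disjoint except that they each contain only their own root, with $V_{K'}=\bigcup_i V_{B_i}\cup \mathrm{int}\,K$ and the interior vertices counted once. Writing $r_i=|V_{B_i}|\ge 1$, the total vertex count gives $\sum_{i=1}^m r_i + (m-2) = n$, i.e. $\sum_i r_i = n-m+2$, and conversely any choice of branches with these sizes reassembles into a unique $K'\in G_{n,K}^{|\partial K|}$. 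So $F_{n,m-2}^{(m)}=\sum_{r_1+\cdots+r_m=n-m+2,\ r_i\ge1}\prod_{i=1}^m N(r_i)$, where $N(r)$ is the number of connected components of the (semi-infinite) Cayley tree containing a fixed root and having $r$ vertices.

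It then remains to identify $N(r)=C_r$. This is exactly the $m=1$ case of the counting problem, which is the result of \cite{pah} quoted in the introduction: the number of different connected components containing a fixed root $x_0$ with $r$ vertices is the Catalan number $C_r$. Substituting $N(r_i)=C_{r_i}$ yields \eqref{fomulaforfulcomponent}.

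The main obstacle is the bijection in the second paragraph — one must check carefully that the branch decomposition is well-defined and reversible. Two points need care: that an interior vertex of $K$ never becomes a root of one of the branches (this uses fullness of $K$, which forces $|\mathcal{N}_{\Gamma^2}(x)|=3=|\mathcal{N}_K(x)|$ for $x\in\mathrm{int}\,K$, so no branch escapes through the interior), and that the branches $B_i$ are genuinely independent, i.e. choosing them separately can never create a cycle or an unintended coincidence of vertices in $\Gamma^2$ — this follows from the tree structure, since distinct edges of $K$ at distinct $x_i$ lead into disjoint subtrees. Once independence and the vertex-count bookkeeping $\sum r_i=n-m+2$ are nailed down, the formula is immediate from the product rule and the $m=1$ result.
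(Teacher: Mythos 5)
Your proposal is correct and follows essentially the same route as the paper: fix the $m-2$ interior vertices, attach to each boundary vertex $x_i$ a rooted branch of $r_i\ge 1$ vertices counted by $C_{r_i}$ via the $m=1$ result of \cite{pah}, and sum the products over $r_1+\cdots+r_m=n-m+2$. Your discussion of why fullness prevents branches from escaping through $\mathrm{int}\,K$ and why the branches are independent is a welcome tightening of the same argument.
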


\begin{proof}

Since there exists only unique full connected component with boundary $\partial
K$ which connects all $x_i$'s, (see Figure 1) the number of vertices
in the full connected component $m-2$ is always counted, left only
$n-m+2$ vertices for different configuration.

\begin{figure}[thbp]
\centering
\includegraphics[width=0.5\columnwidth, clip]{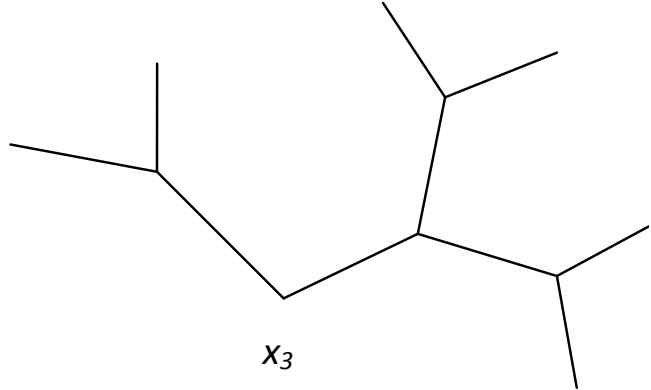}\\
\caption{A connected component containing vertices $x_3$, there is $C_{r_i}$ different connected component contains $x_3$ with $r_i$ number of vertices \cite{pah}.}
\end{figure}

For each vertex  $x_i \in \partial K$, let  $r_i$ denote the number of vertices emanate out from $x_i$, but not back to full connected component. It is known from \cite{pah}, there allow  $C_{r_i}$ number of different connected component in each box for given $r_i$ vertices (See Figure 2). The number of combination for all fixed $r_i $ over $\partial K$   is the product of $C_{r_1}C_{r_2}\cdots C_{r_{m-1}}C_{r_m}$ (For illustration, see Figure 3).
\begin{figure}[h]
\centering
\includegraphics[width=0.5\columnwidth, clip]{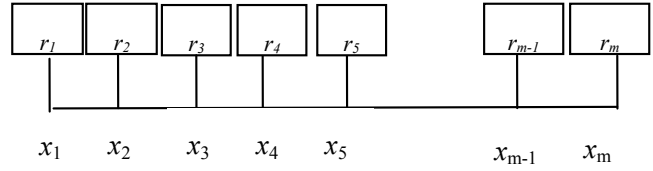}\\
\caption{Each $x_i$ is contained in a connected component with $r_i$ vertices.}
\label{PM}
\end{figure}
The total number is then the products sum over different combination
of  $r_1+r_2+\cdots+r_{m-1}+r_{m}+m-2=n$  i.e. the sum of all
vertices equals to $n$.  Note that  the vertices $x_i$ in the full
connected component are always occupied, therefore $r_i$  is
always at least 1.  Theorem is proved.
\end{proof}
\vskip 2mm

Let us calculate $F_{n,m-2}^{(m)}$ for small $m.$ For this we will
use the following recurrence formula for the Catalan number
\begin{eqnarray*}
C_n=\sum\limits_{i=0}^{n-1}C_iC_{n-1-i},
\end{eqnarray*}
here $C_0:=1.$

$1^\circ.$ Let $m=2.$ We then have
\begin{eqnarray}\label{formulaFnm=2}
F_{n,0}^{(2)}&=&\sum\limits_{\substack{r_1+r_2=n\\ r_1,r_2\ge1}}C_{r_1}C_{r_2}=\sum\limits_{r_1=1}^{n-1}C_{r_1}C_{n-r_1}=C_{n+1}-2C_0C_n\nonumber\\
&=&C_{n+1}-2C_{n}.
\end{eqnarray}
Let us list the first few terms of $F_{n,0}^{(2)}$ (see Sloane \cite{sloane} A002057):
\[1,4,14,48,165,572,2002,7072,25194,...\]

$2^\circ.$ Let $m=3.$ We then get
\begin{eqnarray}\label{formulaFnm=3}
F_{n,1}^{(3)}&=&\sum\limits_{\substack{r_1+r_2+r_3=n-1\nonumber\\ r_1,r_2,r_3\ge1}}C_{r_1}C_{r_2}C_{r_3}=\sum\limits_{r_1=1}^{n-3}C_{r_1}\sum\limits_{\substack{r_2+r_3=n-1-r_1\\r_2,r_3\ge1}}C_{r_2}C_{r_3}\nonumber\\
&=&\sum\limits_{r_1=1}^{n-3}C_{r_1}F_{n-1-r_1,0}^{(2)}=\sum\limits_{r_1=1}^{n-3}C_{r_1}\left(C_{n-r_1}-2C_{n-1-r_1}\right)=\nonumber\\
&=&\sum\limits_{r_1=1}^{n-1}C_{r_1}C_{n-r_1}-C_1C_{n-1}-C_2C_{n-2}-2\left(\sum\limits_{r_1=1}^{n-2}C_{r_1}C_{n-1-r_1}-C_1C_{n-2}\right)\nonumber\\
&=&\sum\limits_{\substack{r_1+r_2=n\\r_1,r_2\ge1}}C_{r_1}C_{r_2}-2\sum\limits_{\substack{r_1+r_2=n-1\\r_1,r_2\ge1}}C_{r_1}C_{r_2}-C_1C_{n-1}
=F_{n,0}^{(2)}-2F_{n-1,0}^{(2)}-C_{n-1}\nonumber\\
&=&C_{n+1}-4C_{n}+3C_{n-1}.
\end{eqnarray}

The first few terms of $F_{n,1}^{(3)}$ (see Sloane \cite{sloane} A003517) are as follows:
\[1,6,27,110,429,1638,6188,23256,87210,...\]

$3^\circ.$ Let $m=4.$ By means the formulas for $F_{n,1}^{(3)},$ $F_{n,0}^{(2)}$  we could get the following formula for $F_{n,2}^{(4)}$ after  algebraic manipulations
\begin{eqnarray}\label{formulaFnm=4}
F_{n,2}^{(4)}&=&\sum\limits_{\substack{r_1+\cdots+r_4=n-2\\ r_1,\ldots,r_4\ge1}}C_{r_1}\cdots C_{r_4}=\sum\limits_{r_1=1}^{n-5}C_{r_1}\sum\limits_{\substack{r_2+r_3+r_4=n-2-r_1\\r_2,r_3,r_4\ge1}}C_{r_2}C_{r_3}C_{r_4}\nonumber\\
&=&\sum\limits_{r_1=1}^{n-5}C_{r_1}F_{n-2-r_1,1}^{(3)}=\cdots=C_{n+1}-6C_{n}+10C_{n-1}-4C_{n-2}.
\end{eqnarray}
The first few terms of $F_{n,2}^{(4)}$ (see Sloane \cite{sloane} A003518) are as follows:
\[1,8,44,208,910,3808,15504,62016,245157,...\]

These calculations lead us to the following conjecture: the number
$F_{n,m-2}^{(m)}$ is represented as a linear combination of the
Catalan numbers.

Note that such a representation allows us to find asymptotical
estimation for $F_{n,m-2}^{(m)}$. Below we want to realize the
stated conjecture.

In the sequel, for any given vector $ a=(a_1,a_2,\cdots,a_m)\in
{\mathbb{R}}^{m}$ we put
\[  a_{\uparrow}=(a_1,a_2\cdots,a_m,0),\]
clearly  $ a_{\uparrow}\in{\mathbb{R}}^{m+1}.$

Let
\begin{equation}\label{matrixA} A_{m}=\left(
      \begin{array}{ccccccc}
        1 & 0 & 0 & 0 & \cdots & 0 & 0 \\
        -2C_0 & 1 & 0 & 0 & \cdots & 0 & 0 \\
        -C_1 & -2C_0 & 1 & 0 & \cdots & 0 & 0 \\
        -C_2 & -C_1 & -2C_0 & 1 & \cdots & 0 & 0 \\
        \cdots & \cdots & \cdots & \cdots & \cdots & \cdots & \cdots \\
        -C_{m-3} & -C_{m-4} & -C_{m-5} & -C_{m-6} & \cdots & 1 & 0 \\
        -C_{m-2} & -C_{m-3} & -C_{m-4} & -C_{m-5} & \cdots & -2C_0 & 1 \\
      \end{array}
    \right)
\end{equation}
and
\begin{equation}\label{matrixB}
B_{[m-1\times m]}=\left(
                  \begin{array}{ccccccc}
                    -C_{m} & -C_{m-1} & -C_{m-2} & \cdots & -C_{3} & -C_{2} & -C_{1} \\
                    -C_{m+1} & -C_{m} & -C_{m-1} & \cdots & -C_{4} & -C_{3} & -C_{2} \\
                    -C_{m+2} & -C_{m+1} & -C_{m} & \cdots & -C_{5} & -C_{4} & -C_{3} \\
                    \cdots & \cdots & \cdots & \cdots & \cdots & \cdots & \cdots \\
                    -C_{2m-2} & -C_{2m-3} & -C_{2m-4} & \cdots & -C_{m+1} & -C_{m} & -C_{m-1} \\
                  \end{array}
                \right)
\end{equation}
be $m\times m$ and $(m-1)\times m$ matrices, respectively (where $m\ge 2$).

Let us consider the following vectors
\begin{eqnarray*}
a^{(1)}=1,\quad a^{(2)}=A_{2}a_{\uparrow}^{(1)},\quad a^{(3)}=A_{3}a_{\uparrow}^{(2)}, \quad  \ldots, \quad a^{(m+1)}=A_{m+1}a_{\uparrow}^{(m)}.
\end{eqnarray*}
\begin{theorem} Let $A_{m}$ and $B_{[m-1\times m]}$ be  the matrices given by \eqref{matrixA} and \eqref{matrixB}, here $m\ge 2$. If $a^{(1)}=1,$ $a^{(m+1)}=A_{m+1}a_{\uparrow}^{(m)}$ and $B_{[m-1\times m]}a^{(m)}=\theta_{m-1}$ then $B_{[m\times m+1]}{a^{(m+1)}}=\theta_m,$ here $\theta_m=(0,0,\cdots,0)$ is  zero vector of ${\mathbb{R}}^{m}.$
\end{theorem}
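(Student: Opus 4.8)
The plan is to pass to generating functions and read off both the hypothesis and the conclusion as statements about the vanishing of a block of consecutive coefficients. To the vector $a^{(m)}=(a^{(m)}_1,\dots,a^{(m)}_m)$ I associate the polynomial $f_m(x)=\sum_{j=1}^{m}a^{(m)}_j x^{j-1}$ and the formal power series $g_m(x)=f_m(x)\,c(x)$, where $c(x)=\sum_{n\ge 0}C_n x^n$ is the Catalan generating series, so that $c=1+x c^2$. Writing $\gamma_k=[x^k]g_m$, one checks that the $k$-th coordinate of $B_{[m-1\times m]}a^{(m)}$ equals $-\gamma_{m+k-1}$, so the hypothesis $B_{[m-1\times m]}a^{(m)}=\theta_{m-1}$ says exactly $\gamma_m=\gamma_{m+1}=\cdots=\gamma_{2m-2}=0$. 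In the same way, $B_{[m\times m+1]}a^{(m+1)}=\theta_m$ is equivalent to $[x^k]g_{m+1}=0$ for $k=m+1,\dots,2m$. So the statement becomes: $m-1$ consecutive vanishing coefficients of $g_m$, from $x^m$ through $x^{2m-2}$, force $m$ consecutive vanishing coefficients of $g_{m+1}$, from $x^{m+1}$ through $x^{2m}$.

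Next I would rewrite the recursion $a^{(m+1)}=A_{m+1}a^{(m)}_{\uparrow}$ as a relation between $g_{m+1}$ and $g_m$. Decompose $A_{m+1}=I-M-N$, where $N$ is the down-shift matrix (ones on the subdiagonal, zeros elsewhere) and $M$ is the strictly lower triangular matrix with $M_{ij}=C_{i-j-1}$ for $i>j$; this reproduces the $-2C_0$ on the subdiagonal (as $-C_0-1$) and the $-C_{t-1}$ that sits $t$ places below the diagonal. Under the identification "vector $\leftrightarrow$ polynomial $\sum_j v_j x^{j-1}$", the operator $N$ is multiplication by $x$ and the operator $M$ is "multiply by $x\,c(x)$ and then truncate to degree $\le m$"; because the last coordinate of $a^{(m)}_{\uparrow}$ is $0$, the $N$-part loses nothing to truncation. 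Multiplying the resulting identity for $f_{m+1}$ by $c$ gives
\[
g_{m+1}=(1-x)\,g_m-\big[x\,g_m\big]_{\le m}\,c(x),
\]
where $[\,\cdot\,]_{\le m}$ denotes truncation to degree $\le m$.

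The last step is coefficient extraction. For $k\ge m+1$ the displayed identity gives
\[
[x^k]g_{m+1}=\gamma_k-\gamma_{k-1}-\sum_{l=0}^{m-1}\gamma_l\,C_{k-1-l}.
\]
To handle the last sum I use $c=1+xc^2$, i.e. $c^2=(c-1)/x$, whence $g_m\,c=f_m\,c^2=(g_m-f_m)/x$, so that
\[
\sum_{l\ge 0}\gamma_l\,C_{k-1-l}=[x^{k-1}](g_m\,c)=[x^k](g_m-f_m)=\gamma_k\qquad(k\ge m),
\]
using $\deg f_m\le m-1$. Removing the tail $\sum_{l\ge m}$, which only runs to $l=k-1$, turns the previous display into
\[
[x^k]g_{m+1}=-\gamma_{k-1}+\sum_{l=m}^{k-1}\gamma_l\,C_{k-1-l}.
\]
For $k\in\{m+1,\dots,2m-1\}$ the index $k-1$ and every index $l$ occurring in the sum lie in $\{m,\dots,2m-2\}$, so by hypothesis all terms vanish; for $k=2m$ only $\gamma_{2m-1}$ survives, once as $-\gamma_{2m-1}$ and once as $\gamma_{2m-1}C_0=\gamma_{2m-1}$, and these cancel. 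Hence $[x^k]g_{m+1}=0$ for $k=m+1,\dots,2m$, which is precisely $B_{[m\times m+1]}a^{(m+1)}=\theta_m$.

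The only delicate point is the bookkeeping: establishing the exact dictionary between the matrices $A_{m+1}$ and $B_{[m-1\times m]}$ on the one side and the generating-function operations on the other, keeping the off-by-one index shifts and the truncation degree straight. Once that dictionary is fixed, the argument runs on the Catalan functional equation alone. I would also remark that the hypothesis $a^{(1)}=1$ only serves to make the sequence $(a^{(m)})_m$ well defined: the implication itself holds for an arbitrary vector $a^{(m)}$, provided $a^{(m+1)}:=A_{m+1}a^{(m)}_{\uparrow}$.
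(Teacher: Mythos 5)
Your proof is correct, and it takes a recognizably different route from the paper's, even though both ultimately run on the single Catalan identity $C_{n+1}=\sum_{i=0}^{n}C_iC_{n-i}$ (your $c=1+xc^2$). The paper works entirely at the level of matrix entries: it forms $D_{[m\times m+1]}=B_{[m\times m+1]}A_{m+1}$, computes each $d_{ij}$ explicitly, collapses the inner sum with the convolution recurrence to get $d_{ij}=-\sum_{r=1}^{i}C_{m-j+r}C_{i-r}+C_{m+i-j}C_0$, and then observes that in $(Da^{(m)}_{\uparrow})_i$ the $r=i$ term cancels against the $C_0$-term while the terms $r=1,\dots,i-1$ are exactly $C_{i-r}\bigl(B_{[m-1\times m]}a^{(m)}\bigr)_r$ and vanish by hypothesis. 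Your dictionary (row of $B$ applied to $a$ $\leftrightarrow$ a coefficient of $f\cdot c$; the decomposition $A_{m+1}=I-M-N$ $\leftrightarrow$ the identity $g_{m+1}=(1-x)g_m-[xg_m]_{\le m}\,c$) is the same computation repackaged, and your final cancellation ($-\gamma_{k-1}$ against the $l=k-1$ term $\gamma_{k-1}C_0$, with the hypothesis killing $l=m,\dots,k-2$) corresponds term-for-term to the paper's. I checked the delicate points you flagged: the identification $(B_{[m-1\times m]}a^{(m)})_k=-\gamma_{m+k-1}$ and $(B_{[m\times m+1]}a^{(m+1)})_i=-[x^{m+i}]g_{m+1}$ are right, the subdiagonal of $-M-N$ does give $-C_0-1=-2C_0$, the truncation degree $\le m$ is correct because the image lives in $\mathbb{R}^{m+1}$, and the step $\sum_{l\ge0}\gamma_lC_{k-1-l}=[x^k](g_m-f_m)=\gamma_k$ for $k\ge m$ is valid since $\deg f_m\le m-1$. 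What your version buys is conceptual transparency: hypothesis and conclusion become the same kind of statement (a window of $m-1$, respectively $m$, consecutive vanishing coefficients of $f\cdot c$), and all index bookkeeping is confined to one truncation; the paper's version is more elementary but forces a double change of summation index. Your closing remark is also accurate: the paper's ``induction on $m$'' is cosmetic, since its inductive step is literally the stated implication and $a^{(1)}=1$ only pins down the particular sequence of vectors.
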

\begin{proof}
We use mathematical induction w.r.t. $m\ge 2.$ Let $m=2.$ Since $a^{(1)}=1$ and
\[
a^{(2)}=A_{2}a_{\uparrow}^{(1)}=\left(
                                          \begin{array}{cc}
                                            1 & 0 \\
                                            -2C_0 & 1 \\
                                          \end{array}
                                        \right)\binom{1}{0}
=(1,-2C_0),\]
\[
B_{[1\times2]}a^{(2)}=(-C_2,-C_1)\binom{1}{-2C_0}=0=\theta_1,
\]
we obtain
\[
a^{(3)}=A_{3}a_{\uparrow}^{(2)}=\left(
                                          \begin{array}{ccc}
                                            1 & 0 & 0\\
                                            -2C_0 & 1 & 0 \\
                                            -C_1 & -2C_0 & 1 \\
                                          \end{array}
                                        \right)\left(\begin{array}{c}
                                                 1 \\
                                                 -2C_0 \\
                                                 0
                                               \end{array}
                                               \right)
=(1,-4C_0,-C_1+4C_0^2)\]
\[
B_{[2\times3]}a^{(3)}=\left(
                      \begin{array}{ccc}
                        -C_3 & -C_2 & -C_1 \\
                        -C_4 & -C_3 & -C_2 \\
                      \end{array}
                    \right)\left(
                             \begin{array}{c}
                               1 \\
                               -4C_0 \\
                               -C_1+4C_0^2 \\
                             \end{array}
                           \right)
                    =(0,0)=\theta_2.
\]
We assume that for $m=k-1$ the statement of the Theorem is true, i.e., $B_{[k-1\times k]}a^{(k)}=\theta_{k-1}$ which is equivalent
\begin{eqnarray}\label{assumptionfork-1}
-\sum\limits_{j=1}^kC_{k+i-j}a_j^{(k)}=0
\end{eqnarray}
where $i=\overline{1,k-1},$ $j=\overline{1,k}$ and $a^{(k)}=(a_1^{(k)},\cdots,a_k^{(k)}).$

We will prove the statement of Theorem for $m=k.$ Let $D_{[k\times k+1]}=B_{[k\times k+1]}A_{k+1}$. We then have
\[B_{[k\times k+1]}a^{(k+1)}=B_{[k\times k+1]}A_{k+1}a_{\uparrow}^{(k)}=D_{[k\times k+1]}a_{\uparrow}^{(k)}.\]
Let us evaluate the matrix $D_{[k\times k+1]}.$ We know that
\[
A_{k+1}=(a_{ij})_{i,j=1}^{k+1}, \quad a_{ij}=\left\{\begin{array}{c}
                                                         0,\quad\quad i<j \\
                                                         1, \quad\quad i=j \\
                                                         -2C_0 \quad\quad i=j+1 \\
                                                         -C_{i-j-1} \quad i>j+1
                                                       \end{array}
                                                       \right.\\
\]
and
\[
B_{[k\times k+1]}=(b_{ij})_{i,j=1}^{k,k+1}=(-C_{k+1+i-j})_{i,j=1}^{k,k+1}
\]
Hence, we get
\begin{eqnarray*}
d_{ij}&=&\sum\limits_{l=1}^{k+1}b_{il}a_{lj}=\sum\limits_{l=j}^{k+1}b_{il}a_{lj}\\
&=&-C_{k+1+i-j}+2C_0C_{k+i-j}+C_1C_{k+i-j-1}+C_2C_{k+i-j-2}+\cdots+C_{k-j}C_{i}\\
&=&-\sum\limits_{l=0}^{k+i-j}C_{l}C_{k+i-j-l}+\sum\limits_{l=0}^{k-j}C_{l}C_{k+i-j-l}+C_{k+i-j}C_0\\
&=&-\sum\limits_{l=k-j+1}^{k+i-j}C_{l}C_{k+i-j-l}+C_{k+i-j}C_0
\end{eqnarray*}
If we set $r=l-k+j$ then one can have
\begin{eqnarray}\label{fordij}
d_{ij}&=&-\sum\limits_{r=1}^{i}C_{k-j+r}C_{i-r}+C_{k+i-j}C_0.
\end{eqnarray} where $i=\overline{1,k}$ and $j=\overline{1,k+1}.$
Therefore, it follows from \eqref{fordij} that
\begin{eqnarray}\label{formulaforDai}
\left(D_{[k\times k+1]}a_{\uparrow}^{(k)}\right)_i&=&\sum\limits_{j=1}^{k+1}d_{ij}\widetilde{a_{j}^{(k)}}=
\sum\limits_{j=1}^{k}d_{ij}{a_{j}^{(k)}}\nonumber\\
&=&\sum\limits_{j=1}^{k}\left(-\sum\limits_{r=1}^{i}C_{k-j+r}C_{i-r}+C_{k+i-j}C_0\right)a_j^{(k)}\nonumber\\
&=&\sum\limits_{r=1}^{i}C_{i-r}\left(-\sum\limits_{j=1}^{k}C_{k+r-j}a_j^{(k)}\right)+C_0\sum\limits_{j=1}^{k}C_{k+i-j}a_j^{(k)}
\end{eqnarray} where $i=\overline{1,k}.$
Consequently, from \eqref{formulaforDai} and \eqref{assumptionfork-1} one finds:

$(i)$ If $i=1$ then
\[\left(D_{[k\times k+1]}a_{\uparrow}^{(k)}\right)_1=-C_0\sum\limits_{j=1}^{k}C_{k+1-j}a_j^{(k)}+C_0\sum\limits_{j=1}^{k}C_{k+1-j}a_j^{(k)}=0;\]

$(ii)$ If $1<i\le k$ then
\[\left(D_{[k\times k+1]}a_{\uparrow}^{(k)}\right)_i=\sum\limits_{r=1}^{i-1}C_{i-r}\left(-\sum\limits_{j=1}^{k}C_{k+r-j}a_j^{(k)}\right)=0.
\]
This means that $D_{[k\times k+1]}a_{\uparrow}^{(k)}=\theta_{k}$ and completes the proof.
\end{proof}

\begin{theorem}\label{analyticfomulaforFnm-2}
Let $A_{m}$ be  the matrix given by \eqref{matrixA} and $a^{(1)}=1,$ $a^{(m+1)}=A_{m+1}a_{\uparrow}^{(m)}$. Let $C_n$ be the Catalan number. Then $$F_{n,m-2}^{m}=a_1^{(m)}C_{n+1}+a_{2}^{(m)}C_n+\cdots+a_m^{(m)}C_{n-m+2},$$ where $a^{(m)}=(a_1^{(m)},a_2^{(m)},\cdots,a_m^{(m)}).$
\end{theorem}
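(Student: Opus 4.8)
The plan is to induct on $m\ge 2$, carrying along as an auxiliary fact the relations $B_{[m-1\times m]}a^{(m)}=\theta_{m-1}$, which are already established for every $m\ge 2$: the preceding Theorem is precisely the inductive step for them, and its proof also verifies the base case $B_{[1\times 2]}a^{(2)}=\theta_1$. Concretely, $B_{[m-1\times m]}a^{(m)}=\theta_{m-1}$ is the family of Catalan-number identities $\sum_{j=1}^m C_{m+l-j}a_j^{(m)}=0$ for $l=1,\dots,m-1$, and these will be exactly what is needed to kill the surplus terms that appear below. The base case $m=2$ of the Theorem is the computation $F_{n,0}^{(2)}=C_{n+1}-2C_n$ carried out above, together with $a^{(2)}=A_2a_\uparrow^{(1)}=(1,-2C_0)$. (Throughout, the asserted identity is understood for $n\ge 2m-2$, the range on which $F_{n,m-2}^{(m)}$ is defined by \eqref{fomulaforfulcomponent}.)

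For the inductive step I would pass from $m$ to $m+1$. First, peeling off the first factor in \eqref{fomulaforfulcomponent} gives, for $n\ge 2m$, the recurrence $F_{n,m-1}^{(m+1)}=\sum_{r=1}^{\,n-2m+1}C_r\,F_{n-1-r,\,m-2}^{(m)}$, where the upper limit is forced because the remaining $m$ boxes must hold at least $m$ vertices; in particular $n-1-r\ge 2m-2$ in every term, so the induction hypothesis applies and $F_{n-1-r,m-2}^{(m)}=\sum_{j=1}^m a_j^{(m)}C_{n+1-r-j}$. Interchanging the two sums reduces the problem to evaluating the truncated Catalan convolutions $S_j=\sum_{r=1}^{n-2m+1}C_rC_{n+1-j-r}$; completing them via $\sum_{r=0}^{N}C_rC_{N-r}=C_{N+1}$ and re-indexing the discarded tail by $s=n+1-j-r$ yields
\[ S_j=C_{n+2-j}-C_{n+1-j}-\sum_{s=0}^{\,2m-1-j}C_sC_{n+1-j-s}. \]

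Substituting back, I would read off the coefficient of each Catalan number $C_{n+2-i}$ in $F_{n,m-1}^{(m+1)}$, with the conventions $a_0^{(m)}=0$ and $a_j^{(m)}=0$ for $j>m$. For $1\le i\le m+1$ this coefficient is $a_i^{(m)}-a_{i-1}^{(m)}-\sum_{j=1}^{i-1}C_{i-j-1}a_j^{(m)}$; splitting off the $j=i-1$ summand, which equals $C_0a_{i-1}^{(m)}=a_{i-1}^{(m)}$, rewrites it as $a_i^{(m)}-2C_0a_{i-1}^{(m)}-\sum_{j=1}^{i-2}C_{i-j-1}a_j^{(m)}$, which is exactly the $i$-th coordinate of $A_{m+1}a_\uparrow^{(m)}=a^{(m+1)}$ (read off the rows of \eqref{matrixA}). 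For the surplus range $m+2\le i\le 2m$ the first two pieces of $S_j$ contribute nothing and the coefficient of $C_{n+2-i}$ collapses to $-\sum_{j=1}^m C_{i-j-1}a_j^{(m)}$; putting $i=m+1+l$ with $1\le l\le m-1$, this is $-\sum_{j=1}^m C_{m+l-j}a_j^{(m)}=0$ by the identity $B_{[m-1\times m]}a^{(m)}=\theta_{m-1}$ (the rows of \eqref{matrixB}). No Catalan number $C_{n+2-i}$ with $i\le 0$ or $i>2m$ shows up, so $F_{n,m-1}^{(m+1)}=\sum_{i=1}^{m+1}a_i^{(m+1)}C_{n+2-i}$, which is the desired formula for $m+1$.

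The step I expect to be the real obstacle is the bookkeeping inside the inductive step — pinning down the finite boundary corrections in $S_j$ so that the leftover terms are exactly those annihilated by the $B$-relations (and not, say, shifted by one), and dealing with the degenerate endpoint $n=2m$, where several of the summation ranges shrink to a single term. Once the corrections are in hand, the rest is just the observation that $A_{m+1}$ of \eqref{matrixA} reassembles the ``bulk'' coefficients while $B_{[m-1\times m]}$ of \eqref{matrixB} wipes out the ``tail'' coefficients, the two matrices of \eqref{matrixA}--\eqref{matrixB} playing exactly complementary roles.
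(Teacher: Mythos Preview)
Your proposal is correct and follows essentially the same approach as the paper: induction on $m$, peeling off one factor in \eqref{fomulaforfulcomponent} to reduce to the $m$-case, completing the truncated Catalan convolutions, and then identifying the resulting coefficients of $C_{n+2-i}$ for $1\le i\le m+1$ with the rows of $A_{m+1}$ acting on $a_\uparrow^{(m)}$ while the surplus coefficients for $m+2\le i\le 2m$ are exactly the rows of $B_{[m-1\times m]}$ acting on $a^{(m)}$ and hence vanish by the preceding Theorem. The only cosmetic difference is that the paper routes the convolution through $F_{n+1-j,0}^{(2)}=C_{n+2-j}-2C_0C_{n+1-j}$, which absorbs both endpoint terms at once, whereas you subtract only the $r=0$ endpoint and recover the $-2C_0$ coefficient afterwards by splitting off the $j=i-1$ summand; the algebra is the same.
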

\begin{proof}
We use mathematical induction w.r.t. $m\ge2.$ Let $m=2.$ Since $a^{(1)}=1$ and
\[
a^{(2)}=A_{2}a_{\uparrow}^{(1)}=\left(
                                          \begin{array}{cc}
                                            1 & 0 \\
                                            -2C_0 & 1 \\
                                          \end{array}
                                        \right)\binom{1}{0}
=(1,-2C_0),\] from \eqref{formulaFnm=2} we obtain that
\[F_{n,0}^{(m)}=C_{n+1}-2C_{n}=1\cdot C_{n+1}-2C_0\cdot C_{n}=a_{1}^{(2)}C_{n+1}+a_{2}^{(2)}C_{n}.\]
We suppose that the statement of the Theorem is true for $m=k$, i.e.,
\begin{eqnarray}\label{assumptionfork}
F_{n,k-2}^{(k)}=a_1^{(k)}C_{n+1}+a_{2}^{(k)}C_n+\cdots+a_k^{(k)}C_{n-k+2},
\end{eqnarray}
and we  prove for $m=k+1.$
Then, it follows from \eqref{fomulaforfulcomponent} that
\begin{eqnarray*}
F_{n,k-1}^{(k+1)}&=&\sum\limits_{\substack{r_1+\cdots+r_{k+1}=n-k+1 \\r_1,\ldots, r_{k+1}\ge 1}}C_{r_1}\cdot\ldots \cdot C_{r_{k+1}}\\
&=&\sum\limits_{r_1=1}^{n-2k+1}C_{r_1}\sum\limits_{\substack{r_2+\cdots+r_{k+1}=n-k+1-r_1 \\r_2,\ldots, r_{k+1}\ge 1}}C_{r_2}\cdot\ldots \cdot C_{r_{k+1}}=\sum\limits_{r_1=1}^{n-2k+1}C_{r_1}F_{n-1-r_1,k-2}^{(k)}
\end{eqnarray*}
Using \eqref{assumptionfork}, from the last equality we get
\begin{eqnarray*}
F_{n,k-1}^{(k+1)}&=&\sum\limits_{r_1=1}^{n-2k+1}C_{r_1}\sum\limits_{j=1}^{k}a_{j}^{(k)}C_{n+1-r_1-j}
=\sum\limits_{j=1}^{k}a_{j}^{(k)}\sum\limits_{r_1=1}^{n-2k+1}C_{r_1}C_{n+1-r_1-j}\\
&=&\sum\limits_{j=1}^{k}a_{j}^{(k)}\left(\sum\limits_{r_1=1}^{n-j}C_{r_1}C_{n+1-r_1-j}-\sum\limits_{r_1=n-2k+2}^{n-j}C_{r_1}C_{n+1-r_1-j}\right)\\
&=&\sum\limits_{j=1}^{k}a_{j}^{(k)}\left(F_{n+1-j,0}^{(2)}-\sum\limits_{r_1=n-2k+2}^{n-j}C_{r_1}C_{n+1-r_1-j}\right)
\end{eqnarray*}
If we set $i=n+1-j-r_1$ then using \eqref{formulaFnm=2} one gets
\begin{eqnarray*}
F_{n,k-1}^{(k+1)}&=&\sum\limits_{j=1}^{k}a_{j}^{(k)}\left(F_{n+1-j,0}^{(2)}-\sum\limits_{i=1}^{2k-1-j}C_{i}C_{n+1-j-i}\right)\\
&=&\sum\limits_{j=1}^{k}a_{j}^{(k)}\left(C_{n+2-j}-2C_{0}C_{n+1-j}-\sum\limits_{i=1}^{2k-1-j}C_{i}C_{n+1-j-i}\right)\\
&=&a_{1}^{(k)}\left(C_{n+1}-2C_{0}C_{n}-\sum\limits_{i=1}^{2k-2}C_{i}C_{n-i}\right)\\
&&+a_2^{(k)}\left(C_{n}-2C_{0}C_{n-1}-\sum\limits_{i=1}^{2k-3}C_{i}C_{n-1-i}\right)+\cdots\\
&&+a_{l}^{(k)}\left(C_{n+2-l}-2C_{0}C_{n+1-l}-\sum\limits_{i=1}^{2k-1-l}C_{i}C_{n+1-l-i}\right)+\cdots\\
&&+a_{k}^{(k)}\left(C_{n+2-k}-2C_{0}C_{n+1-k}-\sum\limits_{i=1}^{k-1}C_{i}C_{n+1-k-i}\right)
\end{eqnarray*}
follow by
\begin{eqnarray*}
F_{n,k-1}^{(k+1)}&=&a_{1}^{(k)}C_{n+1}+\left(a_{2}^{(k)}-2C_{0}a_{1}^{(k)}\right)C_{n}+\left(a_{3}^{(k)}-2C_0a_{2}^{(k)}-C_{1}a_{1}^{(k)}\right)C_{n-1}\\
&&+\left(a_{4}^{(k)}-2C_0a_{3}^{(k)}-C_1a_{2}^{(k)}-C_2a_{1}^{(k)}\right)C_{n-2}+\cdots\\
&&+\left(a_{l}^{(k)}-2C_0a_{l-1}^{(k)}-C_{1}a_{l-2}^{(k)}-\cdots-C_{l-2}a_{1}^{(k)}\right)C_{n-l+2}+\cdots\\
&&+\left(a_{k}^{(k)}-2C_0a_{k-1}^{(k)}-C_{1}a_{k-2}^{(k)}-\cdots-C_{k-2}a_{1}^{(k)}\right)C_{n-k+2}\\
&&+\left(-2C_0a_{k}^{(k)}-C_1a_{k-1}^{(k)}-C_{2}a_{k-2}^{(k)}-\cdots-C_{k-1}a_{1}^{(k)}\right)C_{n-k+1}\\
&&+\left(-C_1a_{k}^{(k)}-C_2a_{k-1}^{(k)}-C_{3}a_{k-2}^{(k)}-\cdots-C_{k}a_{1}^{(k)}\right)C_{n-k}+\cdots\\
&&+\left(-C_{k-1}a_{k}^{(k)}-C_{k}a_{k-1}^{(k)}-C_{k+1}a_{k-2}^{(k)}-\cdots-C_{2k-2}a_{1}^{(k)}\right)C_{n-2k+2}\\
&=&\left(A_{k+1}a_{\uparrow}^{(k)}\right)_1C_{n+1}+\cdots+\left(A_{k+1}a_{\uparrow}^{(k)}\right)_{k+1}C_{n-k+1}\\
&&+\left(B_{[k-1\times k]}a^{(k)}\right)_1C_{n-k}+\cdots+\left(B_{[k-1\times k]}a^{(k)}\right)_{k-1}C_{n-2k+2}
\end{eqnarray*}
Taking into account
\begin{eqnarray*}
A_{k+1}a_{\uparrow}^{(k)}=a^{(k+1)},\quad \quad  B_{[k-1\times k]}a^{(k)}=\theta_{k-1}.
\end{eqnarray*}
we obtain
\[F_{n,k-1}^{(k+1)}=a_1^{(k+1)}C_{n+1}+a_{2}^{(k+1)}C_n+\cdots+a_{k+1}^{(k+1)}C_{n-k+1},\]
and this completes the proof.
\end{proof}

\noindent
{\it Remark 3.}
Since the vector $a^{(m)}$ does not depend on $n$ and all coordinates of the vector $a^{(m)}$ are nonzero, one can find an estimate easily through the following well-known estimation of Catalan numbers
\begin{eqnarray}\label{estimationofCatalan}
C_n=O(1)\frac{4^n}{n^{3/2}}
\end{eqnarray}
\begin{corollary}
The asymptotic estimation of $F_{n,m-2}^{m}$ is given by
\[F_{n,m-2}^{m}=O(1)C_{n-m+2}=O(1)\frac{4^{n-m+2}}{(n-m+2)^{3/2}}. \]
\end{corollary}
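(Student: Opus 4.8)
The plan is to derive the corollary directly from Theorem \ref{analyticfomulaforFnm-2} together with the Catalan asymptotics \eqref{estimationofCatalan}. By Theorem \ref{analyticfomulaforFnm-2},
\[
F_{n,m-2}^{m}=\sum_{j=1}^{m}a_{j}^{(m)}C_{n+2-j},
\]
and the crucial structural facts are that $m$ is fixed, the sum has only $m$ terms, and the weights $a_{1}^{(m)},\dots,a_{m}^{(m)}$ do not depend on $n$. Thus everything reduces to comparing each shifted Catalan number $C_{n+2-j}$ with the smallest one that occurs, $C_{n-m+2}$.

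For the upper bound I would read \eqref{estimationofCatalan} as a two-sided estimate: there are constants $0<c_{1}\le c_{2}$ with $c_{1}4^{N}N^{-3/2}\le C_{N}\le c_{2}4^{N}N^{-3/2}$ for all large $N$. Since $j\le m$ forces $n+2-j\ge n-m+2$, we get for every such $j$ and all large $n$
\[
C_{n+2-j}\le c_{2}\,\frac{4^{n+2-j}}{(n+2-j)^{3/2}}=c_{2}\,4^{m-j}\,\frac{4^{n-m+2}}{(n+2-j)^{3/2}}\le c_{2}\,4^{m-j}\,\frac{4^{n-m+2}}{(n-m+2)^{3/2}}\le\frac{c_{2}}{c_{1}}\,4^{m-j}\,C_{n-m+2}.
\]
Summing against $|a_{j}^{(m)}|$ gives $F_{n,m-2}^{m}\le\big(\sum_{j=1}^{m}|a_{j}^{(m)}|\,(c_{2}/c_{1})\,4^{m-j}\big)C_{n-m+2}=O(1)\,C_{n-m+2}$ with an implied constant depending only on $m$, and the second displayed equality of the corollary follows by applying \eqref{estimationofCatalan} once more to $C_{n-m+2}$.

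For the matching lower bound I would avoid the linear combination, where a priori the top-order terms could cancel, and instead return to the product formula \eqref{fomulaforfulcomponent}. Keeping only the single summand $r_{1}=n-2m+3$, $r_{2}=\cdots=r_{m}=1$ (admissible once $n\ge 2m-2$) and discarding the remaining nonnegative terms yields $F_{n,m-2}^{m}\ge C_{n-2m+3}C_{1}^{m-1}=C_{n-2m+3}$. Since $C_{n-2m+3}=C_{(n-m+2)-(m-1)}$ differs from $C_{n-m+2}$ only by the fixed shift $m-1$, the ratio $C_{n-2m+3}/C_{n-m+2}$ tends to $4^{-(m-1)}>0$ by \eqref{estimationofCatalan}, so $F_{n,m-2}^{m}\ge c(m)\,C_{n-m+2}$ for all large $n$. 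Combined with the previous step this is exactly the asserted two-sided estimate.

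I expect the only non-routine point to be precisely the one just handled: ruling out that the combination of Catalan numbers in Theorem \ref{analyticfomulaforFnm-2} degenerates to strictly lower order. The elementary one-term bound above settles this; but if a sharp constant is wanted one can argue instead through generating functions, namely $F_{n,m-2}^{m}=[x^{n-m+2}]\big(\sum_{r\ge1}C_{r}x^{r}\big)^{m}$, and since $\sum_{r\ge1}C_{r}x^{r}=\frac{1-\sqrt{1-4x}}{1+\sqrt{1-4x}}=1-2\sqrt{1-4x}+O(1-4x)$ about the dominant singularity $x=\tfrac14$, its $m$-th power has singular part $-2m\sqrt{1-4x}$; standard singularity analysis then gives $F_{n,m-2}^{m}\sim m\,C_{n-m+2}$, re-proving the corollary with the explicit constant $m$. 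All remaining steps are routine manipulations with the Catalan asymptotics.
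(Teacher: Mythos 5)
Your argument is correct, and it is in fact more complete than what the paper offers: the paper's entire justification is Remark 3, which says that since the coefficients $a_j^{(m)}$ in Theorem \ref{analyticfomulaforFnm-2} are nonzero and independent of $n$, the estimate follows from \eqref{estimationofCatalan}. Your upper bound is exactly that argument made precise (each $C_{n+2-j}$ is at most a constant multiple of $C_{n-m+2}$ because the shift $j\le m$ is fixed). Where you genuinely depart from the paper is in recognizing that, if the $O(1)$ is to mean a two-sided asymptotic estimate, one must rule out cancellation among the top-order terms --- and cancellation really does occur here (e.g.\ for $m=3$ one has $C_{n+1}-4C_n+3C_{n-1}$, whose leading terms $16C_{n-1}-16C_{n-1}$ cancel exactly, leaving $\sim 3C_{n-1}$), so the paper's ``all coordinates are nonzero'' remark does not by itself justify a lower bound. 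Your single-summand bound $F_{n,m-2}^{m}\ge C_{n-2m+3}C_1^{m-1}$ from \eqref{fomulaforfulcomponent} closes this gap cleanly, and your generating-function computation giving the sharp constant $F_{n,m-2}^{m}\sim m\,C_{n-m+2}$ is consistent with the explicit cases \eqref{formulaFnm=2}--\eqref{formulaFnm=4} ($2C_n$, $3C_{n-1}$, $4C_{n-2}$ respectively). In short: same route as the paper for the upper bound, plus a necessary supplement the paper glosses over; the generating-function refinement buys the exact leading constant, which the paper does not attempt.
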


\begin{center}
\section{Polygon counting problem for two vertices}
\end{center}

Let $x_1$ and $x_m$ be two  given vertices and $K(x_1,x_m)=(V_m,E_m)$ be the shortest path connecting the given vertices $x_1$ and $x_m.$ We then know that $|\partial K|=\{x_1,x_m\}.$ We suppose that $|V_m|=m$ and $m\ge 2.$ Then it is clear that $|int K(x_1,x_m)|=m-2.$ By $T_{n,m}^{(2)}$ we denote the number of elements of $G_{n,K(x_1,x_m)}^{|\partial K(x_1,x_m)|}$ which is the set all of the connected component, with $n$ number of vertices, containing two given vertices $x_1$ and $x_m$.\\

\begin{prop}
Let $C_n$ be Catalan numbers, where $n\in {{\mathbb{N}}},$ then we heve
\begin{eqnarray}\label{fomulafortwovertices}
T_{n,m}^{(2)}=\sum\limits_{\substack{r_1+\cdots+r_m=n-m+2 \\{r_1,r_m\ge 1}, r_2\ldots,r_{m-1}\ge0}}C_{r_1}\cdot\ldots \cdot C_{r_m}.
\end{eqnarray}
\end{prop}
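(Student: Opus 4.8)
The plan is to mirror the proof of Proposition 4.2 (the full-component case), adjusting only for the difference in the boundary structure. Recall that when $K=K(x_1,x_m)$ is merely a shortest path, its boundary is $\partial K=\{x_1,x_m\}$ while $x_2,\dots,x_{m-1}$ are interior vertices; by the earlier discussion, $|G_{n,K}^{|\partial K|}| = T_{n,m}^{(2)}$ counts all connected components with $n$ vertices containing the path $K$. The path itself already uses $m$ vertices, leaving $n-m$ further vertices to distribute, but it is more convenient to bookkeep via the variable $n-m+2$ exactly as in \eqref{fomulaforfulcomponent}, so that the exponents match.

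First I would set up the decomposition of an arbitrary $K'\in G_{n,K}^{|\partial K|}$ according to how the extra vertices hang off each $x_i$. The key observation is that $K'$ is obtained from the fixed path $x_1-x_2-\cdots-x_m$ by attaching, at each vertex $x_i$, a rooted ``box'' growing away from the path. At an endpoint $x_1$ (resp.\ $x_m$), the box grows into the two tree-directions not used by the edge $\langle x_1,x_2\rangle$ (resp.\ $\langle x_{m-1},x_m\rangle$), which by the result of \cite{pah} (Figure 2) admits exactly $C_{r_1}$ (resp.\ $C_{r_m}$) configurations when the box contains $r_1$ (resp.\ $r_m$) vertices including $x_1$ (resp.\ $x_m$); here $r_1,r_m\ge 1$ since $x_1,x_m$ are always present. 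At an interior vertex $x_i$ ($2\le i\le m-1$), both of its tree-neighbours other than $x_{i-1},x_{i+1}$ are already used up by the path on one side — more precisely, $x_i$ has degree $3$ in the tree, two of its edges go to $x_{i-1}$ and $x_{i+1}$, so only one free direction remains, and a rooted subtree hanging in that single direction with $r_i$ additional vertices again contributes $C_{r_i}$ choices; crucially this box may be empty, so $r_i\ge 0$ and $C_0=1$ accounts for the ``no attachment'' case. Multiplying over independent choices at distinct vertices gives the product $C_{r_1}\cdots C_{r_m}$.

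Next I would account for the total vertex count. The path contributes its $m$ vertices; a box at $x_i$ with parameter $r_i$ contributes $r_i$ vertices where, for the endpoints, $r_i$ is taken to \emph{include} the endpoint itself, and for interior $x_i$, $r_i$ counts only the strictly-attached vertices. One must verify the constant is chosen so that $|V_{K'}| = n$ translates to $r_1 + \cdots + r_m = n - m + 2$ with the stated range constraints $r_1,r_m\ge 1$ and $r_2,\dots,r_{m-1}\ge 0$; this is the only place a careful index check is needed, and it is exactly parallel to the sentence ``the vertices $x_i$ in the full connected component are always occupied'' in the proof of \eqref{fomulaforfulcomponent}, except that now only the two boundary vertices carry this forced-occupancy, hence only $r_1$ and $r_m$ are bounded below by $1$. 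Summing the product $C_{r_1}\cdots C_{r_m}$ over all such tuples yields \eqref{fomulafortwovertices}.

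The main obstacle, as usual in these counting arguments, is not any deep computation but pinning down precisely why each vertex of the path admits an \emph{independent} box count of the Catalan type, and in particular confirming that an interior path vertex $x_i$ has exactly one free tree-direction (so the attached structure is a single rooted subtree, giving $C_{r_i}$ including the $r_i=0$ case) while an endpoint has two free directions that the result of \cite{pah} already packages as a single Catalan number $C_{r_i}$ with $r_i\ge 1$. Once this local picture is justified — essentially a direct consequence of $\Gamma^2$ having every vertex of degree $3$ together with the cited single-root counting result — the remainder is the bookkeeping of the index shift, and the proposition follows.
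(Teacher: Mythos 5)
Your proposal is correct and follows essentially the same route as the paper's own proof: decompose any $K'\supset K(x_1,x_m)$ into the fixed path plus independent rooted subtrees at each $x_i$, count each by the single-root Catalan result of \cite{pah} (with $r_1,r_m\ge1$ forced at the two boundary vertices and $r_i\ge0$ allowed at interior ones since $C_0=1$ covers the empty attachment), and sum over $r_1+\cdots+r_m=n-m+2$. Your local degree analysis at interior versus endpoint vertices is in fact spelled out more carefully than in the paper, which only gestures at Figures 3 and 4.
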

\begin{proof}
Since vertices  $x_1$ and $x_m$  always present (see Figure 3) and connected by a unique shortest path with m vertices, we denote this set of vertices by $V_m$, the collection of $x_1$,   $x_m$ and vertices along shortest path. For each vertex  $x_i \in V_m$, there allow $C_{r_i}$  number of different connected component, which the connected component not in the $V_m$,  emanate from  $x_i$  with $r_i$  number of vertices (see Figure 4).

\begin{figure}[thbp]
\centering
\includegraphics[width=0.5\columnwidth, clip]{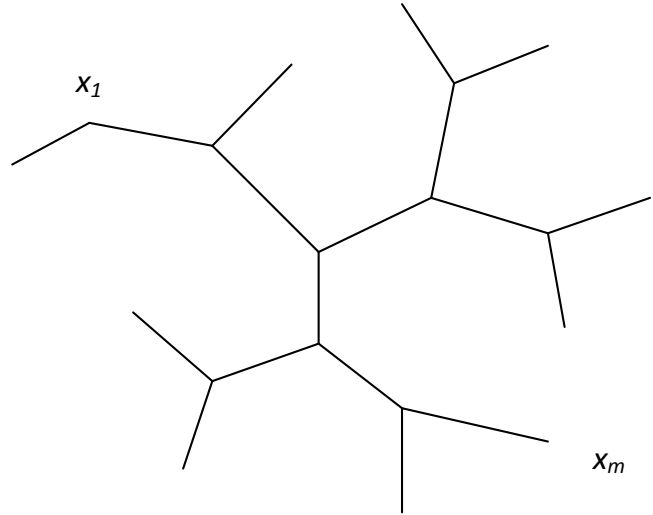}\\
\caption{A connected component containing vertices $x_1$ and $x_m$ only, connected component which emanate out from them is not shown.}
\label{PM}
\end{figure}

The number of combination for each fix $x_i$  over  $V_m$ is the product of $C_{r_1}\cdot C_{r_2}\cdots C_{r_m}.$ The total number is then sum over  $r_1+r_2+\cdots+r_{m-1}+r_m+m-2=n$ which excluding the $m-2$ vertices. Since $x_1$ and $x_m$  always present, $r_1$ and $r_m$  is always at least one. Theorem proved.

\vskip 2mm
\begin{figure}[thbp]
\centering
\includegraphics[width=0.5\columnwidth, clip]{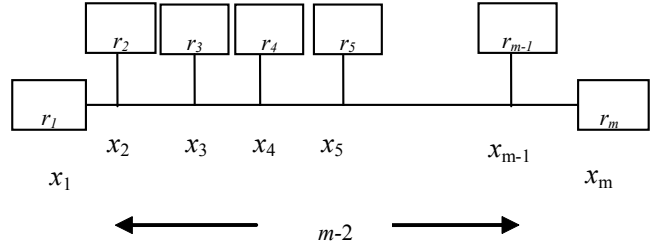}\\
\caption{There are $m-2$ vertices along short path, excluding $x_1$ and $x_2$. From each $x_i$  emanate another single connected component with $r_i$ vertices.}
\end{figure}
\vskip 2mm

\end{proof}
\vskip 2mm

Let us calculate $T_{n,m}^{(2)}$ for small $m.$

$1^\circ.$ Let $m=2.$ We then have
\begin{eqnarray}\label{formulaTnm=2}
T_{n,2}^{(2)}&=&\sum\limits_{\substack{r_1+r_2=n\\ r_1,r_2\ge1}}C_{r_1}C_{r_2}=C_{n+1}-2C_{n}.
\end{eqnarray}

$2^\circ.$ Let $m=3.$ We then get
\begin{eqnarray}\label{formulaTnm=3}
T_{n,3}^{(2)}&=&\sum\limits_{\substack{r_1+r_2+r_3=n-1\nonumber\\ {r_1,r_3\ge1,r_2\ge0}}}C_{r_1}C_{r_2}C_{r_3}=\sum\limits_{r_2=0}^{n-3}C_{r_2}\sum\limits_{\substack{r_1+r_3=n-1-r_2\\r_1,r_3\ge1}}C_{r_1}C_{r_3}\nonumber\\
&=&\sum\limits_{r_2=0}^{n-3}C_{r_2}T_{n-1-r_2,2}^{(2)}=\sum\limits_{r_2=0}^{n-3}C_{r_2}\left(C_{n-r_2}-2C_{n-1-r_2}\right)\nonumber\\
&=&\sum\limits_{r_2=0}^{n}C_{r_2}C_{n-r_2}-C_0C_{n}-C_1C_{n-1}-C_2C_{n-2}\nonumber\\
&&-2\left(\sum\limits_{r_2=0}^{n-1}C_{r_2}C_{n-1-r_2}-C_0C_{n-1}-C_{1}C_{n-2}\right)\nonumber\\
&=&C_{n+1}-3C_{n}+C_{n-1}.
\end{eqnarray}

$3^\circ.$ Let $m=4.$ After  a little bit algebraic manipulation we could get the following formula for $T_{n,4}^{(2)}$
\begin{eqnarray}\label{formulaTnm=4}
T_{n,4}^{(2)}&=&\sum\limits_{\substack{r_1+\cdots+r_4=n-2\\ {r_1,r_4\ge1,r_2,r_3\ge0}}}C_{r_1}\cdots C_{r_4}=\sum\limits_{r_2=0}^{n-4}C_{r_2}\sum\limits_{\substack{r_1+r_3+r_4=n-2-r_2\\{r_1,r_4\ge1,r_3\ge0}}}C_{r_1}C_{r_3}C_{r_4}\nonumber\\
&=&\sum\limits_{r_2=0}^{n-4}C_{r_2}T_{n-1-r_1,3}^{(2)}=\cdots=C_{n+1}-4C_{n}+3C_{n-1}.
\end{eqnarray}

If we denote by $T_{n,1}^{(2)}:=C_{n+1}-C_{n}$ for the sake of the beauty of the formula, then one can observe the following recurrence formula
\begin{eqnarray}\label{recformulaforT3}
T_{n,3}^{(2)}&=&T_{n,2}^{(2)}-T_{n-1,1}^{(2)}\\
T_{n,4}^{(2)}&=&T_{n,3}^{(2)}-T_{n-1,2}^{(2)}
\end{eqnarray}

\noindent
{\it Remark 4.}
It is clear that $T_{m,m}^{(2)}=1$ for any $m\ge2.$

Let us show this recurrence formula for $T_{n,m}^{(2)}$ in general case.

\begin{theorem}
We have the following recurrence formula for $T_{n,m}^{(2)}$
\begin{eqnarray}\label{recformulaforTnm}
T_{n,m}^{(2)}=T_{n,m-1}^{(2)}-T_{n-1,m-2}^{(2)}
\end{eqnarray}
where $n>m$ and $m\ge3.$
\end{theorem}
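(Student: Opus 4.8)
The plan is to route the whole computation through the ordinary generating function of the Catalan numbers. Put $c(x)=\sum_{k\ge0}C_kx^k$ (with $C_0=1$); it obeys the defining relation $c(x)=1+xc(x)^2$, i.e. $xc(x)^2=c(x)-1$. Everything below takes place formally in $\mathbb{Z}[[x]]$, and since $c(0)=1$ is a unit, negative powers of $c(x)$ are well-defined power series, so no convergence issue arises.

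Step 1 is to rewrite \eqref{fomulafortwovertices} as a statement about coefficients. Multiplying the identity $T_{n,m}^{(2)}=\sum C_{r_1}\cdots C_{r_m}$ — the sum over $r_1+\cdots+r_m=n-m+2$ with $r_1,r_m\ge1$ and $r_2,\dots,r_{m-1}\ge0$ — by $x^{\,n-m+2}$ and summing over $n$, the sum factors over the $m$ slots: each of the two endpoint slots contributes $\sum_{r\ge1}C_rx^r=c(x)-1$, and each of the $m-2$ interior slots contributes $\sum_{r\ge0}C_rx^r=c(x)$. Hence, setting $g_m(x):=(c(x)-1)^2c(x)^{m-2}$, one gets $T_{n,m}^{(2)}=[x^{\,n-m+2}]\,g_m(x)$ for every $m\ge2$ (for $m=2$ this is just $(c-1)^2$, from the two endpoint slots). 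Moreover this is consistent with the auxiliary convention $T_{n,1}^{(2)}:=C_{n+1}-C_n$ from \eqref{recformulaforT3} if one puts $g_1(x)=(c-1)^2c^{-1}$: indeed $\sum_n(C_{n+1}-C_n)x^{\,n+1}=(c-1)-xc=\big(c(c-1)-xc^2\big)/c=(c-1)^2/c$ by $xc^2=c-1$.

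Step 2 is the core identity. On one side, $g_{m-1}-g_{m-2}=(c-1)^2c^{m-3}-(c-1)^2c^{m-4}=(c-1)^3c^{m-4}$. On the other side, using $xc^2=c-1$, $x\,g_m=x(c-1)^2c^{m-2}=(c-1)^2c^{m-4}(xc^2)=(c-1)^3c^{m-4}$. Therefore $x\,g_m=g_{m-1}-g_{m-2}$. The right-hand side vanishes to order $3$ at $x=0$ (since $c-1=x+\cdots$), so dividing by $x$ is legitimate, and extracting the coefficient of $x^{\,n-m+2}$ gives
\[
T_{n,m}^{(2)}=[x^{\,n-m+2}]g_m=[x^{\,n-m+3}](g_{m-1}-g_{m-2})=T_{n,m-1}^{(2)}-T_{n-1,m-2}^{(2)},
\]
because $n-m+3=n-(m-1)+2=(n-1)-(m-2)+2$. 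This is precisely \eqref{recformulaforTnm}; it holds for all $m\ge3$, the case $m=3$ being the one in which the factor $c^{-1}$ — equivalently, the definition of $T_{n,1}^{(2)}$ — is invoked. In fact the coefficient identity holds in all degrees, so the hypothesis $n>m$ is not even needed.

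I expect the only delicate point to be the bookkeeping in Step 1: getting the exponent shift $n-m+2$ right, correctly isolating the two "pinned endpoint" slots (which force $r_1,r_m\ge1$) from the $m-2$ interior slots, and checking that the $c^{-1}$-convention dovetails with $T_{n,1}^{(2)}:=C_{n+1}-C_n$; after that, Step 2 is a two-line consequence of $c=1+xc^2$. For readers preferring to avoid generating functions (in the spirit of the rest of the paper), there is an alternative: first record the peeling identity $T_{n,m}^{(2)}=\sum_{r=0}^{n-m}C_r\,T_{n-1-r,m-1}^{(2)}$, obtained by fixing the value $r=r_2$ of the first interior index in \eqref{fomulafortwovertices} exactly as was done for $m=3,4$ in \eqref{formulaTnm=3}--\eqref{formulaTnm=4}, and then induct on $m$ with base cases $m=3,4$ already in hand, the inductive step combining the peeling identity at levels $m$, $m-1$, $m-2$ with the induction hypothesis and the boundary values $T_{\ell,\ell}^{(2)}=1$ (Remark 4) and $T_{\ell,\ell-1}^{(2)}=\ell+1$. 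That route gives the same conclusion but is more laborious because of the boundary terms in the peeling sums, which is why I would present the generating-function argument.
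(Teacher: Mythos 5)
Your proof is correct, and it takes a genuinely different route from the paper. The paper argues by induction on $m$: it peels off the first interior slot to get the convolution identity $T_{n,k+1}^{(2)}=\sum_{r_2=0}^{n-k-1}C_{r_2}T^{(2)}_{n-1-r_2,k}$, substitutes the induction hypothesis inside the sum, and then adjusts the summation limits so that the two extra boundary terms $C_{n-k}T^{(2)}_{k-1,k-1}$ and $C_{n-k}T^{(2)}_{k-2,k-2}$ cancel (both equal $C_{n-k}$ by Remark 4); the sketch you give at the end of your proposal is essentially this argument. Your main argument instead encodes \eqref{fomulafortwovertices} as $T_{n,m}^{(2)}=[x^{\,n-m+2}]\,(c-1)^2c^{m-2}$ and reduces the recurrence to the one-line identity $x\,g_m=g_{m-1}-g_{m-2}$, which is immediate from $xc^2=c-1$. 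I checked the bookkeeping: the exponent shift, the $m=2$ case $[x^n](c-1)^2=C_{n+1}-2C_n$, and the compatibility of $g_1=(c-1)^2c^{-1}$ with the convention $T_{n,1}^{(2)}=C_{n+1}-C_n$ all come out right, and the $m=3$ instance correctly reproduces \eqref{recformulaforT3}. What your route buys: it is shorter, it dispenses with the boundary-term cancellation entirely, it treats the base case $m=3$ (where the $T_{n,1}^{(2)}$ convention enters) uniformly with all larger $m$, and it shows the recurrence holds for all $n$, not only $n>m$ — e.g.\ at $n=m$ it gives $1=(m+1)-m$, consistent with $T_{m,m}^{(2)}=1$. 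What the paper's route buys is that it stays entirely within the elementary convolution manipulations of Catalan numbers used everywhere else in the text and requires no formal power series machinery. Either proof is acceptable; yours would shorten the section noticeably.
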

\begin{proof}
We use a mathematical induction w.r.t. $m.$ Let $m=3.$ In \eqref{recformulaforT3} it is shown  that the recurrence formula \eqref{recformulaforTnm} is true. We suppose that the recurrence formula \eqref{recformulaforTnm} is true for any  $m\le k$ and we  prove for $m=k+1.$
We know that
\begin{eqnarray*}
T_{n,k+1}^{(2)}&=&\sum\limits_{\substack{r_1+\cdots+r_{k+1}=n-k+1 \\{r_1,r_{k+1}\ge 1}, r_2\ldots,r_{k}\ge0}}C_{r_1}\cdot\ldots \cdot C_{r_{k+1}}\\
&=&\sum\limits_{r_2=0}^{n-k-1}C_{r_2}\sum\limits_{\substack{r_1+r_3+\cdots+r_{k+1}=n-k+1-r_2 \\{r_1,r_{k+1}\ge 1}, r_3\ldots,r_{k}\ge0}}C_{r_1}\cdot C_{r_3}\ldots \cdot C_{r_{k+1}}=\sum\limits_{r_2=0}^{n-k-1}C_{r_2}T^{(2)}_{n-1-r_2,k},
\end{eqnarray*} for any $n> k+1$ and $k\ge2.$ Then due to assumption of the induction one gets
\begin{eqnarray*}
T_{n,k+1}^{(2)}&=&\sum\limits_{r_2=0}^{n-k-1}C_{r_2}\left(T^{(2)}_{n-1-r_2,k-1}-T^{(2)}_{n-2-r_2,k-2}\right)\\
&=&\sum\limits_{r_2=0}^{n-k-1}C_{r_2}T^{(2)}_{n-1-r_2,k-1}-\sum\limits_{r_2=0}^{n-k-1}C_{r_2}T^{(2)}_{n-2-r_2,k-2}\\
&=&\sum\limits_{r_2=0}^{n-k}C_{r_2}T^{(2)}_{n-1-r_2,k-1}-C_{n-k}T^{(2)}_{k-1,k-1}-\sum\limits_{r_2=0}^{n-k}C_{r_2}T^{(2)}_{n-2-r_2,k-2}
+C_{n-k}T^{(2)}_{k-2,k-2}\\
&=&T_{n,k}^{(2)}-T_{n-1,k-1}^{(2)}.
\end{eqnarray*}
This completes the proof.
\end{proof}

Let us try to find an analytic formula for calculating $T_{n,m}^{(2)}.$

\begin{theorem}
There exist vectors \[a^{\lfloor m\rfloor}=\left(a_1^{\lfloor m\rfloor},-a_2^{\lfloor m\rfloor},\cdots,(-1)^{l-1}a_l^{\lfloor m\rfloor},\cdots,(-1)^{{\lfloor m\rfloor}-1}a_{\lfloor m\rfloor}^{\lfloor m\rfloor}\right)\]
such that $a_l^{\lfloor m\rfloor}>0$ for any $l=\overline{1,\lfloor m\rfloor}$ and
\begin{eqnarray*}\label{analyticformulaforTnm}
T_{n,m}^{(2)}=a_1^{\lfloor m\rfloor}C_{n+1}-a_2^{\lfloor m\rfloor}C_{n}+\cdots+(-1)^{l-1}a_l^{\lfloor m\rfloor}C_{n+2-l}+\cdots+(-1)^{{\lfloor m\rfloor}-1}a_{\lfloor m\rfloor}^{\lfloor m\rfloor}C_{n+2-\lfloor m\rfloor},
\end{eqnarray*}
here $\lfloor m\rfloor:=[\frac{m-1}{2}]+2.$
\end{theorem}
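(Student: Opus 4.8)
The plan is to prove the formula by induction on $m$, using the recurrence \eqref{recformulaforTnm}, i.e. $T_{n,m}^{(2)}=T_{n,m-1}^{(2)}-T_{n-1,m-2}^{(2)}$, from the previous theorem. The base of the induction is supplied by the explicit formulas \eqref{formulaTnm=2} and \eqref{formulaTnm=3} (with the convention $T_{n,1}^{(2)}:=C_{n+1}-C_n$, which already matches the claimed shape since $\lfloor 1\rfloor=[\tfrac{0}{2}]+2=2$), so that the two-step recurrence can be started from $m=1,2$. For the inductive step I would assume the result for $m-1$ and $m-2$, written uniformly as $T_{n,j}^{(2)}=\sum_{l=1}^{\lfloor j\rfloor}(-1)^{l-1}a_l^{\lfloor j\rfloor}C_{n+2-l}$ with $a_l^{\lfloor j\rfloor}>0$, and the convention that $a_l^{\lfloor j\rfloor}=0$ whenever $l\notin\{1,\dots,\lfloor j\rfloor\}$.

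Substituting these into $T_{n,m}^{(2)}=T_{n,m-1}^{(2)}-T_{n-1,m-2}^{(2)}$ is then a matter of bookkeeping the Catalan indices: the shift $n\mapsto n-1$ sends $C_{n+2-k}$ to $C_{n+1-k}=C_{n+2-(k+1)}$, so the coefficient of $C_{n+2-l}$ in $T_{n,m}^{(2)}$ receives $(-1)^{l-1}a_l^{\lfloor m-1\rfloor}$ from the first term and $-(-1)^{l-2}a_{l-1}^{\lfloor m-2\rfloor}$ from the second. Since $-(-1)^{l-2}=(-1)^{l-1}$, the two contributions carry the same sign $(-1)^{l-1}$, and one gets a clean convolution-type recursion for the absolute values,
\[
a_l^{\lfloor m\rfloor}=a_l^{\lfloor m-1\rfloor}+a_{l-1}^{\lfloor m-2\rfloor},
\]
with the alternating-sign pattern reproducing itself automatically. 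In particular $a_1^{\lfloor m\rfloor}=a_1^{\lfloor m-1\rfloor}$ stays equal to $1$, and each $a_l^{\lfloor m\rfloor}$ is a sum of nonnegative quantities.

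The remaining work is to check that the support is exactly $\{n+1,n,\dots,n+2-\lfloor m\rfloor\}$, i.e. that no coefficient in this range vanishes and none outside it survives; this is where the parity of $m$ has to be handled with care, and it is the step I expect to be the main obstacle. One must verify that $\lfloor m\rfloor=\max\{\lfloor m-1\rfloor,\ \lfloor m-2\rfloor+1\}$ in both parity cases (it is, because $\lfloor m\rfloor-\lfloor m-2\rfloor=1$ always and $\lfloor m\rfloor\ge\lfloor m-1\rfloor$), so that the bottom index is $n+2-\lfloor m\rfloor$; that the top coefficient $a_1^{\lfloor m\rfloor}=1\neq0$; and that the last coefficient $a_{\lfloor m\rfloor}^{\lfloor m\rfloor}=a_{\lfloor m\rfloor}^{\lfloor m-1\rfloor}+a_{\lfloor m\rfloor-1}^{\lfloor m-2\rfloor}$ is strictly positive — when $m$ is even, $\lfloor m\rfloor=\lfloor m-1\rfloor$ makes the first summand $a_{\lfloor m-1\rfloor}^{\lfloor m-1\rfloor}>0$, whereas when $m$ is odd, $\lfloor m\rfloor=\lfloor m-1\rfloor+1=\lfloor m-2\rfloor+1$ makes the first summand zero but the second equal to $a_{\lfloor m-2\rfloor}^{\lfloor m-2\rfloor}>0$. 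An analogous one-line check at $l=1$ (where $a_0^{\lfloor m-2\rfloor}=0$ but $a_1^{\lfloor m-1\rfloor}>0$) and at the interior indices finishes the positivity, and together with the consistency point $T_{m,m}^{(2)}=1$ from Remark 4 (which pins the identity at the edge $n=m$ where \eqref{recformulaforTnm} is not applied) this completes the induction, yielding precisely $\lfloor m\rfloor=[\tfrac{m-1}{2}]+2$ nonzero, sign-alternating terms.
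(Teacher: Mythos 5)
Your proposal is correct and follows essentially the same route as the paper: induction on $m$ via the recurrence $T_{n,m}^{(2)}=T_{n,m-1}^{(2)}-T_{n-1,m-2}^{(2)}$, the index shift producing the coefficient recursion $a_l^{\lfloor m\rfloor}=a_l^{\lfloor m-1\rfloor}+a_{l-1}^{\lfloor m-2\rfloor}$, and a parity check on $\lfloor m\rfloor$ to control the support and positivity of the extreme coefficients. Your extra care at the edge $n=m$ and the uniform zero-padding convention are minor refinements of the same argument.
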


\begin{proof}
We use a mathematical induction w.r.t. $m.$ In \eqref{formulaTnm=2}, \eqref{formulaTnm=3}, \eqref{formulaTnm=4} for $m=2,3,4,$ we have already found the vectors $a^{\lfloor 2\rfloor}, a^{\lfloor 3\rfloor}, a^{\lfloor 4\rfloor}$ satisfying the assertion of Theorem.

We suppose that the assertion  of Theorem is true for $m\le k$ and we  prove for $m=k+1.$
Due to assumption of induction we have
\begin{eqnarray*}
T_{n,k}^{(2)}&=&a_1^{\lfloor k\rfloor}C_{n+1}-a_2^{\lfloor k\rfloor}C_{n}+\cdots+(-1)^{{\lfloor k\rfloor}-1}a_{\lfloor k\rfloor}^{\lfloor k\rfloor}C_{n+2-\lfloor k\rfloor},\\
T_{n-1,k-1}^{(2)}&=&a_1^{\lfloor k-1\rfloor}C_{n}-a_2^{\lfloor k-1\rfloor}C_{n-1}+\cdots+(-1)^{{\lfloor k-1\rfloor}-1}a_{\lfloor k-1\rfloor}^{\lfloor k-1\rfloor}C_{n+1-\lfloor k-1\rfloor}.
\end{eqnarray*}
We know that if $k=2t$ then $\lfloor k+1\rfloor=t+2,$ $\lfloor k\rfloor=\lfloor k-1\rfloor=t+1$ and \[n+1-\lfloor k-1\rfloor=n+2-\lfloor k+1\rfloor=n-t,\]
and if $k=2t+1$ then $\lfloor k+1\rfloor=\lfloor k\rfloor=t+2,$ $\lfloor k-1\rfloor=t+1$ and
\[n+1-\lfloor k-1\rfloor=n+2-\lfloor k\rfloor=n+2-\lfloor k+1\rfloor=n-t.\]
Then, it follows from \eqref{recformulaforTnm} that
\begin{eqnarray*}
T_{n,k+1}^{(2)}&=&T_{n,k}^{(2)}-T_{n-1,k-1}^{(2)}\\
&=&a_1^{\lfloor k+1\rfloor}C_{n+1}-a_2^{\lfloor k+1\rfloor}C_{n}+\cdots+(-1)^{l-1}a_l^{\lfloor k+1\rfloor}C_{n+2-l}+\\
&&\quad\quad\quad\quad\quad\quad\quad\quad\quad\quad\quad\quad+\cdots+(-1)^{{\lfloor k+1\rfloor}-1}a_{\lfloor k+1\rfloor}^{\lfloor k+1\rfloor}C_{n+2-\lfloor k+1\rfloor},
\end{eqnarray*}
where
\begin{eqnarray}\label{recurenceformulaforam}
a_1^{\lfloor k+1\rfloor}&=&a_1^{\lfloor k\rfloor},\nonumber\\
a_2^{\lfloor k+1\rfloor}&=&a_2^{\lfloor k\rfloor}+a_1^{\lfloor k-1\rfloor},\nonumber\\
&\cdots&\nonumber\\
a_l^{\lfloor k+1\rfloor}&=&a_l^{\lfloor k\rfloor}+a_{l-1}^{\lfloor k-1\rfloor},\\
&\cdots&\nonumber\\
a_{\lfloor k+1\rfloor}^{\lfloor k+1\rfloor}&=&\left\{\begin{array}{cc}
                                                     a_{\lfloor k\rfloor}^{\lfloor k\rfloor}+a_{\lfloor k-1\rfloor}^{\lfloor k-1\rfloor}, &  k=2t+1\\
                                                     a_{\lfloor k-1\rfloor}^{\lfloor k-1\rfloor}, &   k=2t.
                                                     \end{array}
\right.\nonumber
\end{eqnarray}
Since $a_i^{\lfloor k\rfloor}>0$ and $a_j^{\lfloor k-1\rfloor}>0$ for any $i=\overline{1,\lfloor k\rfloor}$ and $j=\overline{1,\lfloor k-1\rfloor},$ we have $a_l^{\lfloor k+1\rfloor}>0$ for all $l=\overline{1,\lfloor k+1\rfloor}.$ This completes the proof of Theorem.
\end{proof}

Right now, we will try to find an explicit form of the vector
\begin{eqnarray*}
a^{\lfloor m\rfloor}&=&\left(a_1^{\lfloor m\rfloor},-a_2^{\lfloor m\rfloor},\cdots,(-1)^{l-1}a_l^{\lfloor m\rfloor},\cdots,(-1)^{{\lfloor m\rfloor}-1}a_{\lfloor m\rfloor}^{\lfloor m\rfloor}\right)
\end{eqnarray*} for any $m\ge 3.$ We  will call this vector as a \textit{generating vector} of $T_{n,m}^{(2)}.$

Let us introduce the following notations for a given vector $a^{(m)}=(a_1,\cdots,a_m)\in{\mathbb{R}}^{m-1}$
\begin{eqnarray*}
a^{(m)}_{\downarrow}=(0,a^{(m)})=(0,a_1^{(m)},\cdots,a_m^{(m)}),\\
a^{(m)}_{\Lsh}=\left\{\begin{array}{cc}
               (a^{(m)},0) & \lfloor m+1\rfloor>\lfloor m\rfloor \\
               a^{(m)} & \lfloor m+1\rfloor=\lfloor m\rfloor
             \end{array}\right.
\end{eqnarray*} here $(a^{(m)},0)=(a_1^{(m)},\cdots,a_m^{(m)},0)$ and $\lfloor m\rfloor=[\frac{m-1}{2}]+2.$

Let
\begin{equation*}\label{matrixT} {\mathcal{T}}_{m}=\left(
      \begin{array}{ccccccc}
        1 & 0 & 0 & 0 & \cdots & 0 & 0 \\
        -C_0 & 1 & 0 & 0 & \cdots & 0 & 0 \\
        -C_1 & -C_0 & 1 & 0 & \cdots & 0 & 0 \\
        -C_2 & -C_1 & -C_0 & 1 & \cdots & 0 & 0 \\
        \cdots & \cdots & \cdots & \cdots & \cdots & \cdots & \cdots \\
        -C_{m-3} & -C_{m-4} & -C_{m-5} & -C_{m-6} & \cdots & 1 & 0 \\
        -C_{m-2} & -C_{m-3} & -C_{m-4} & -C_{m-5} & \cdots & -C_0 & 1 \\
      \end{array}
    \right)
\end{equation*}
be $m\times m$ matrix.

\begin{theorem}
Let $a^{\lfloor m+1\rfloor}$ and $a^{\lfloor m\rfloor}$ be the generating vectors of $T_{n,m+1}^{(2)}$ and $T_{n,m}^{(2)},$ respectively. Then we have the following formula
\begin{eqnarray}\label{am+1Tm+1am}
a^{\lfloor m+1\rfloor}&=& {\mathcal{T}}_{\lfloor m+1\rfloor}a^{\lfloor m\rfloor}_\Lsh
\end{eqnarray} where ${\lfloor m\rfloor}=[\frac{m-1}{2}]+2$ and $m\ge 2.$
\end{theorem}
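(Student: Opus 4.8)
The plan is to derive the matrix identity directly from the analytic representation of $T_{n,m}^{(2)}$ established in the previous theorem, combined with the ``peeling'' relation
\[
T_{n,m+1}^{(2)}=\sum_{r=0}^{n-m-1}C_{r}\,T_{n-1-r,m}^{(2)},
\]
which already occurs inside the proof of the recurrence \eqref{recformulaforTnm} (and may also be read off from \eqref{fomulafortwovertices} by isolating the weight $r_{2}=r$ carried by the second vertex of the shortest path). Writing the generating vector so that $T_{n,m}^{(2)}=\sum_{l=1}^{\lfloor m\rfloor}(a^{\lfloor m\rfloor})_{l}\,C_{n+2-l}$, I would insert this into the peeling relation and interchange the two sums, obtaining
\[
T_{n,m+1}^{(2)}=\sum_{l=1}^{\lfloor m\rfloor}(a^{\lfloor m\rfloor})_{l}\sum_{r=0}^{n-m-1}C_{r}C_{n+1-l-r}.
\]

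The second step is to complete each truncated Catalan convolution to a full one. Applying $\sum_{r=0}^{N}C_{r}C_{N-r}=C_{N+1}$ with $N=n+1-l$ and re-indexing the omitted tail via $s=n+1-l-r$ gives $\sum_{r=0}^{n-m-1}C_{r}C_{n+1-l-r}=C_{n+2-l}-\sum_{s=0}^{m+1-l}C_{s}C_{n+1-l-s}$. Substituting back and collecting, for each $j$, the coefficient of $C_{n+2-j}$ (the summand $C_{n+1-l-s}$ contributes to $C_{n+2-j}$ exactly when $j=l+s+1$), one obtains a representation $T_{n,m+1}^{(2)}=\sum_{j}c_{j}C_{n+2-j}$ in which, for $1\le j\le\lfloor m+1\rfloor$,
\[
c_{j}=(a^{\lfloor m\rfloor}_{\Lsh})_{j}-\sum_{l=1}^{j-1}C_{j-1-l}\,(a^{\lfloor m\rfloor}_{\Lsh})_{l}.
\]
Here one uses that $j-1\le\lfloor m\rfloor$ (true because $\lfloor m+1\rfloor-\lfloor m\rfloor\in\{0,1\}$), so the truncation in the inner sum disappears, and one also notes that $\Lsh$ appends the trailing zero precisely when $\lfloor m+1\rfloor>\lfloor m\rfloor$, i.e.\ exactly when $m$ is even and the ``diagonal'' entry $(a^{\lfloor m\rfloor})_{\lfloor m\rfloor+1}$ is missing; this is the case in which the generating vector must lengthen by one. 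The right-hand side of the displayed formula for $c_{j}$ is, by construction, the $j$-th coordinate of $\mathcal{T}_{\lfloor m+1\rfloor}a^{\lfloor m\rfloor}_{\Lsh}$.

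To conclude, the previous theorem already guarantees that $T_{n,m+1}^{(2)}$ admits a representation as a linear combination of only $C_{n+1},\dots,C_{n+2-\lfloor m+1\rfloor}$, and the representation of a sequence $\big(T_{n,m+1}^{(2)}\big)_{n}$ as a finite linear combination of the shifted Catalan sequences $(C_{n+2-j})_{n}$ is unique, since these sequences are linearly independent (a fact already implicit in the preceding theorems). Hence the numbers $c_{j}$ computed above must coincide with the entries of $a^{\lfloor m+1\rfloor}$, which yields $a^{\lfloor m+1\rfloor}=\mathcal{T}_{\lfloor m+1\rfloor}a^{\lfloor m\rfloor}_{\Lsh}$; in particular the coefficients $c_{j}$ with $j>\lfloor m+1\rfloor$ vanish automatically and need no separate treatment. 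I expect the only genuine obstacle to be the combinatorial bookkeeping: keeping the truncation limits of the convolution consistent across the re-indexing, and matching the two branches in the definition of $\Lsh$ with the parity of $m$ — once those are settled, the argument is just the Catalan convolution identity applied once.
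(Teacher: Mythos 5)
Your route is genuinely different from the paper's. The paper proves \eqref{am+1Tm+1am} by induction on $m$ starting from the two-term recurrence $a^{\lfloor m+1\rfloor}=a^{\lfloor m\rfloor}_\Lsh-a^{\lfloor m-1\rfloor}_\downarrow$ (the coefficient form of $T_{n,m+1}^{(2)}=T_{n,m}^{(2)}-T_{n-1,m-1}^{(2)}$), and then manipulates $\mathcal{T}_{\lfloor k\rfloor}$ applied to differences of shifted vectors, with a case split on whether $\lfloor k+1\rfloor=\lfloor k\rfloor$. You instead go back one step further, to the one-step peeling identity $T_{n,m+1}^{(2)}=\sum_{r}C_rT_{n-1-r,m}^{(2)}$, complete each truncated Catalan convolution via $\sum_{r=0}^{N}C_rC_{N-r}=C_{N+1}$, and read off the coefficient of $C_{n+2-j}$ as $(a^{\lfloor m\rfloor}_\Lsh)_j-\sum_{l=1}^{j-1}C_{j-1-l}(a^{\lfloor m\rfloor}_\Lsh)_l$, which is exactly the $j$-th coordinate of $\mathcal{T}_{\lfloor m+1\rfloor}a^{\lfloor m\rfloor}_\Lsh$. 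The bookkeeping ($j=l+s+1$, the bound $j-1\le\lfloor m\rfloor$ coming from $\lfloor m+1\rfloor-\lfloor m\rfloor\in\{0,1\}$, the parity condition governing $\Lsh$) checks out, and the small cases reproduce $(1,-3,1)$ and $(1,-4,3)$. Your derivation is arguably more illuminating, since it exhibits $\mathcal{T}$ as the ``subtract the convolution tail'' operator and avoids the paper's case analysis.

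The one step you should not leave as an aside is the final identification. Your computation shows that $\mathcal{T}_{\lfloor m+1\rfloor}a^{\lfloor m\rfloor}_\Lsh$, padded with the coefficients $c_j$ for $\lfloor m+1\rfloor<j\le m+2$, is the coefficient vector of \emph{some} representation of $T_{n,m+1}^{(2)}$ as a combination of shifted Catalan numbers; to conclude that it equals the generating vector $a^{\lfloor m+1\rfloor}$ and that the trailing $c_j$ vanish, you invoke uniqueness of such representations, i.e.\ linear independence of the sequences $(C_{n+2-j})_n$. That fact is true, but it is not ``implicit in the preceding theorems'': the paper never addresses uniqueness, and its proof sidesteps the issue by defining $a^{\lfloor m+1\rfloor}$ through the recursion \eqref{recurenceformulaforam} and working only with that recursion. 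So either supply the independence argument explicitly (for instance: if $\sum_jc_jC_{N-j}=0$ for all large $N$, then $\bigl(\sum_jc_jx^j\bigr)\cdot\frac{1-\sqrt{1-4x}}{2x}$ is a polynomial, which forces $c=0$ because the Catalan generating function is not rational), or else verify directly that your $c_j$ satisfy \eqref{recurenceformulaforam}, or show by a second application of the convolution identity that the trailing coefficients vanish. With that point settled, your proof is complete.
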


\begin{proof} We know that if $a^{\lfloor m+1\rfloor},$ $a^{\lfloor m\rfloor},$ $a^{\lfloor m-1\rfloor}$ are the generating vectors of $T_{n,m+1}^{(2)},$ $T_{n,m}^{(2)},$ $T_{n,m-1}^{(2)},$ respectively then it follows from \eqref{recurenceformulaforam} that
\begin{eqnarray}\label{compactformofam}
a^{\lfloor m+1\rfloor}=a^{\lfloor m\rfloor}_\Lsh-a^{\lfloor m-1\rfloor}_\downarrow
\end{eqnarray}

We use mathematical induction w.r.t. $m\ge2$ to prove the formula \eqref{am+1Tm+1am}.

For $m=2,3,$ the formula \eqref{am+1Tm+1am} is true. We suppose that the formula \eqref{am+1Tm+1am} is true for $m\le k$ and we prove for $m=k+1.$ Here, it can be happened two case either $\lfloor k+1\rfloor=\lfloor k\rfloor$ or $\lfloor k+1\rfloor>\lfloor k\rfloor.$ We will consider the first case, i.e. $\lfloor k+1\rfloor=\lfloor k\rfloor$. Analogously, one can show for the second case.

Since $\lfloor k+1\rfloor=\lfloor k\rfloor$ due to \eqref{compactformofam} we have
\begin{eqnarray}\label{forak+1}
a^{\lfloor k+1\rfloor}=a^{\lfloor k\rfloor}_\Lsh-a^{\lfloor k-1\rfloor}_\downarrow=a^{\lfloor k\rfloor}-a^{\lfloor k-1\rfloor}_\downarrow.
\end{eqnarray}
It follows from \eqref{forak+1} and the assumption of induction that
\begin{eqnarray*}
a^{\lfloor k+1\rfloor}&=&{\mathcal{T}}_{\lfloor k\rfloor}a_\Lsh^{\lfloor k-1\rfloor}-a^{\lfloor k-1\rfloor}_\downarrow\\
&=&{\mathcal{T}}_{\lfloor k\rfloor}a_\Lsh^{\lfloor k\rfloor}+{\mathcal{T}}_{\lfloor k\rfloor}a_\Lsh^{\lfloor k-1\rfloor}-{\mathcal{T}}_{\lfloor k\rfloor}a_\Lsh^{\lfloor k\rfloor}-a^{\lfloor k-1\rfloor}_\downarrow\\
&=&{\mathcal{T}}_{\lfloor k\rfloor}a_\Lsh^{\lfloor k\rfloor}+{\mathcal{T}}_{\lfloor k\rfloor}\left(a_\Lsh^{\lfloor k-1\rfloor}-a_\Lsh^{\lfloor k\rfloor}\right)-a^{\lfloor k-1\rfloor}_\downarrow.
\end{eqnarray*}
Since $\lfloor k+1\rfloor=\lfloor k\rfloor$ we have ${\mathcal{T}}_{\lfloor k+1\rfloor}={\mathcal{T}}_{\lfloor k\rfloor}$ and
\begin{eqnarray*}
a^{\lfloor k+1\rfloor}&=&{\mathcal{T}}_{\lfloor k+1\rfloor}a_\Lsh^{\lfloor k\rfloor}+{\mathcal{T}}_{\lfloor k\rfloor}\left(a_\Lsh^{\lfloor k-1\rfloor}-a_\Lsh^{\lfloor k\rfloor}\right)-a^{\lfloor k-1\rfloor}_\downarrow.
\end{eqnarray*}
Right now we want to show that
\begin{eqnarray*}
{\mathcal{T}}_{\lfloor k\rfloor}\left(a_\Lsh^{\lfloor k-1\rfloor}-a_\Lsh^{\lfloor k\rfloor}\right)=a^{\lfloor k-1\rfloor}_\downarrow
\end{eqnarray*}
We know that $\lfloor k+1\rfloor=\lfloor k\rfloor.$ Then $a_\Lsh^{\lfloor k\rfloor}=a^{\lfloor k\rfloor}$ and
\begin{eqnarray*}
a_\Lsh^{\lfloor k-1\rfloor}-a_\Lsh^{\lfloor k\rfloor}=a_\Lsh^{\lfloor k-1\rfloor}-a^{\lfloor k\rfloor}=a_\downarrow^{\lfloor k-2\rfloor}.
\end{eqnarray*}
Since $\lfloor k-1\rfloor=\lfloor k-2\rfloor$ due to assumption of induction we have
\begin{eqnarray*}
a^{\lfloor k-1\rfloor}={\mathcal{T}}_{\lfloor k-1\rfloor}a_\Lsh^{\lfloor k-2\rfloor}={\mathcal{T}}_{\lfloor k-1\rfloor}a^{\lfloor k-2\rfloor}.
\end{eqnarray*}
From the construction of ${\mathcal{T}}_{\lfloor k\rfloor}$ it follows immediately that
\begin{eqnarray*}
{\mathcal{T}}_{\lfloor k\rfloor}\left(a_\Lsh^{\lfloor k-1\rfloor}-a_\Lsh^{\lfloor k\rfloor}\right)={\mathcal{T}}_{\lfloor k\rfloor}a_\downarrow^{\lfloor k-2\rfloor}=\left(0,{\mathcal{T}}_{\lfloor k-1\rfloor}a^{\lfloor k-2\rfloor}\right)=a_\downarrow^{\lfloor k-1\rfloor}.
\end{eqnarray*}
This completes the proof.
\end{proof}

Analogously, using the estimation \eqref{estimationofCatalan} of Catalan numbers, one finds the following asymptotical estimation for $T_{n,m}^{(2)}.$
\begin{corollary}
The asymptotic estimation of $T_{n,m}^{(2)}$ is given by
\[T_{n,m}^{(2)}=O(1)C_{n-\lfloor m\rfloor+2}=O(1)\frac{4^{n-\lfloor m\rfloor}+2}{(n-\lfloor m\rfloor+2)^{3/2}}, \]
where $\lfloor m\rfloor=[\frac{m-1}{2}]+2.$
\end{corollary}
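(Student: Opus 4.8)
The plan is to read the estimate directly off the closed-form Catalan expansion established in the Theorem immediately preceding this corollary (the one giving the generating vectors $a^{\lfloor m\rfloor}$). By that Theorem we may write
\[
T_{n,m}^{(2)}=\sum_{l=1}^{\lfloor m\rfloor}(-1)^{l-1}a_l^{\lfloor m\rfloor}C_{n+2-l},
\]
where $\lfloor m\rfloor=[\frac{m-1}{2}]+2$ is a fixed integer once $m$ is fixed, and the coefficients $a_l^{\lfloor m\rfloor}>0$ do not depend on $n$. Thus $T_{n,m}^{(2)}$ is a linear combination of only $\lfloor m\rfloor$ Catalan numbers $C_{n+1},C_n,\dots,C_{n-\lfloor m\rfloor+2}$ with $n$-independent coefficients, and all the work reduces to comparing these Catalan numbers with one another as $n\to\infty$. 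This is exactly parallel to the corollary for $F_{n,m-2}^{m}$ obtained via Remark 3.

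First I would record the elementary comparison of consecutive Catalan numbers. From $C_k=\frac{1}{k+1}\binom{2k}{k}$ one has $\frac{C_{k+1}}{C_k}=\frac{2(2k+1)}{k+2}$, which is bounded and tends to $4$ as $k\to\infty$. Iterating this gives, for each $l=1,\dots,\lfloor m\rfloor$,
\[
\frac{C_{n+2-l}}{C_{n-\lfloor m\rfloor+2}}=\prod_{k=n-\lfloor m\rfloor+2}^{\,n+1-l}\frac{C_{k+1}}{C_k}\longrightarrow 4^{\lfloor m\rfloor-l},
\]
a finite limit depending only on $m$ and $l$, since the product has the fixed number $\lfloor m\rfloor-l$ of factors. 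In particular each ratio $C_{n+2-l}/C_{n-\lfloor m\rfloor+2}$ stays bounded as $n\to\infty$, so $C_{n+2-l}=O(1)C_{n-\lfloor m\rfloor+2}$ uniformly over the finitely many values of $l$.

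Combining these two facts finishes the argument. By the triangle inequality and monotonicity of the Catalan numbers in their index,
\[
\bigl|T_{n,m}^{(2)}\bigr|\le\sum_{l=1}^{\lfloor m\rfloor}a_l^{\lfloor m\rfloor}C_{n+2-l}\le\Bigl(\sum_{l=1}^{\lfloor m\rfloor}a_l^{\lfloor m\rfloor}\Bigr)C_{n+1},
\]
and since $\lfloor m\rfloor$ and the $a_l^{\lfloor m\rfloor}$ are fixed while $C_{n+1}=O(1)C_{n-\lfloor m\rfloor+2}$ by the previous step, we conclude $T_{n,m}^{(2)}=O(1)C_{n-\lfloor m\rfloor+2}$. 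Substituting the standard asymptotic $C_k=O(1)\,4^k/k^{3/2}$ from \eqref{estimationofCatalan} with $k=n-\lfloor m\rfloor+2$ then yields the displayed form $O(1)\,4^{\,n-\lfloor m\rfloor+2}/(n-\lfloor m\rfloor+2)^{3/2}$.

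I do not expect a genuine obstacle here: the result is a soft corollary of the exact Catalan expansion. The only point deserving a line of care is that the alternating signs in the expansion could in principle produce cancellation; but $O(1)$ requires only an upper bound on the magnitude, so the triangle-inequality estimate above suffices and the signs are irrelevant. Were one to want the matching lower bound, i.e. the true order of growth, one would instead isolate the leading term $a_1^{\lfloor m\rfloor}C_{n+1}$ and show it dominates the remaining fixed number of terms, each smaller by a factor bounded away from that of the leading term.
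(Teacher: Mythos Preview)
Your proposal is correct and follows the same approach as the paper, which simply says ``Analogously, using the estimation \eqref{estimationofCatalan} of Catalan numbers, one finds\ldots'' and gives no further argument beyond pointing back to Remark~3. If anything, you supply more detail than the paper does, spelling out the ratio $C_{k+1}/C_k$ and the triangle-inequality bound that justify why the fixed-length Catalan expansion with $n$-independent coefficients immediately yields $T_{n,m}^{(2)}=O(1)\,C_{n-\lfloor m\rfloor+2}$.
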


\section{Conclusion}
We obtained an exact formula for $F_{n,m-2}^{m}$ and $T_{n,m}^{(2)}$ as a linear combination of Catalan numbers, where the coefficient does not depend on $n$. An estimate is then derived.

\vskip 0.5in
\noindent {\bf\Large Acknowledgments}\\

\noindent  This research is funded by the IIUM Research Endowment Fund EDW B 0905-296.

\end{document}